\DeclareMathOperator{\ArcCot}{arccot}
\let\originalleft\left
\let\originalright\right
\renewcommand{\left}{\mathopen{}\mathclose\bgroup\originalleft}
\renewcommand{\right}{\aftergroup\egroup\originalright}
\protected\def\verythinspace{%
  \ifmmode
    \mskip0.5\thinmuskip
  \else
    \ifhmode
      \kern0.08334em
    \fi
  \fi
}
\theoremstyle{plain}
\newtheorem{theorem}{Theorem}
\newtheorem{corollary}[theorem]{Corollary}
\theoremstyle{remark}
\newtheorem{remark}[theorem]{Remark}
\newtheorem*{remark*}{Remark}
\newtheorem*{remarks*}{Remarks}
\theoremstyle{definition}
\newtheorem{problem}[theorem]{Problem}
\newtheorem{definition}[theorem]{Definition}
\newtheorem*{notation*}{Notation}
\newtheorem*{example*}{Example}
\begin{document}
\title[Nonrigidity of flat ribbons]{Nonrigidity of flat ribbons}

\author{Matteo Raffaelli}
\address{Institute of Discrete Mathematics and Geometry\\
TU Wien\\
Wiedner Hauptstra{\ss}e 8-10/104\\
1040 Vienna\\
Austria}
\email{matteo.raffaelli@tuwien.ac.at}
\thanks{This work was supported by Austrian Science Fund (FWF) project F 77 (SFB ``Advanced Computational Design'').}
\date{August 29, 2022}
\subjclass[2020]{Primary: 53A05; Secondary: 53A04, 74B20, 74K20}
\keywords{Bending energy, Darboux frame, developable surface, locally nonplanar curve, rectifying developable, ribbon, ruling angle}
\begin{abstract}
	We study ribbons of vanishing Gaussian curvature, i.e., \emph{flat} ribbons, constructed along a curve in $\mathbb{R}^{3}$. In particular, we first investigate to which extent the ruled structure determines a flat ribbon: in other words, we ask whether for a given curve $\gamma$ and ruling angle (angle between the ruling line and the curve's tangent) there exists a well-defined flat ribbon. It turns out that the answer is positive only up to an initial condition, expressed by a choice of normal vector at a point. We then study the set of infinitely narrow flat ribbons along a fixed curve $\gamma$ in terms of energy. By extending a well-known formula for the bending energy of the rectifying developable, introduced in the literature by Sadowsky in 1930, we obtain an upper bound for the difference between the bending energies of two solutions of the initial value problem. We finally draw further conclusions under some additional assumptions on the ruling angle and the curve $\gamma$.
\end{abstract}
\maketitle
\tableofcontents

\section{Introduction and main results}\label{Intro}

Developable, or \emph{flat}, surfaces in $\mathbb{R}^{3}$ are among the most classical and well-studied objects in differential geometry \cite{lawrence2011, ushakov1999}. They are characterized by having zero Gaussian curvature or, equivalently, by being ruled surfaces with a constant family of tangent planes along each ruling. Our main interest in this article is to study the set of flat surfaces containing a given space curve, or, more precisely, the set of \emph{flat ribbons along $\gamma$}.

Let $I =[0,L]$, let $\gamma \colon I \to \mathbb{R}^{3}$ be a smooth, regular connected curve, and let $S \subset\mathbb{R}^{3}$ be a smooth surface; without loss of generality, we may assume $\gamma$ to be unit-speed. We say that $S$ is \textit{locally nonplanar} if it does not contain any planar open set. Further, if $S$ is ruled and $\gamma(t) \in S$, then we define the \textit{width of $S$ (with respect to $\gamma$) at $t$} to be the length of the projection of the ruling passing from $\gamma(t)$ onto the normal plane $\gamma'(t)^{\perp}$.

\begin{definition}
	A developable surface $D$ that contains $\gamma$ is called a \textit{flat ribbon along $\gamma$} if the following conditions are satisfied:
	\begin{enumerate}
		\item $D$ is locally nonplanar, and it is a compact subset of $\mathbb{R}^{3}$.
		\item $\gamma$ is transversal to every ruling of $D$ and meets each of them at the midpoint.
		\item $D$ has constant width.
	\end{enumerate}
\end{definition}

It is well known that, if the curvature of $\gamma$ is always different from zero, then there exist plenty of flat ribbons along $\gamma$. Indeed, let $N \colon I \to \mathbb{R}^{3}$ be a unit vector field---always normal to $\gamma'$---along $\gamma$. It is not difficult to check that, if $\langle \gamma''(t), N(t) \rangle \neq 0$ for all $t \in I$, then the image of the map $ t \mapsto \gamma(t) + (N(t)^{\perp} \cap N'(t)^{\perp})$ is a well-defined surface in a neighborhood of $\gamma$, both locally nonplanar and flat; see \cite[pp.~195--197]{docarmo1976} and section \ref{FlatRibbon}.

On the other hand, to any (singly) ruled surface containing $\gamma$ one can associate a function $\alpha \colon I \to [0,\pi)$, called \textit{ruling angle}, describing the angle between the ruling line and the tangent vector of $\gamma$. Different ruled surfaces along $\gamma$ possessing equal ruling angle could/should be regarded as akin, if not equivalent.

It is therefore natural to consider the following problem.

\begin{problem}
Given a flat ribbon along $\gamma$, describe the set of all flat ribbons along the same curve $\gamma$ having the given width and ruling angle.
\end{problem}

In this paper, we shall see that, under some mild conditions, the set in question is isomorphic to a \emph{full} circle. Indeed, suppose that $N$ is the normal vector of a flat ribbon $\mathcal{R}(N)$ along $\gamma$, and denote the corresponding ruling angle by $\alpha(N)$. Then the following result holds.

\begin{theorem}\label{mainTH}
Suppose that $\gamma$ is locally nonplanar, i.e., its restriction to any open interval is nonplanar, and let $\varphi$ be a smooth function $I \to (0, \pi)$. For any $t_{0} \in I$ and any unit vector $v \in \gamma'(t_{0})^{\perp}$, there exists a flat ribbon $\mathcal{R}(V)$ along $\gamma$ such that $V(t_{0})=v$ and $\alpha(V) = \varphi$.
\end{theorem}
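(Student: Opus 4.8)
The plan is to parametrize the candidate ribbons by a single angle function, to arrange the prescribed ruling angle by construction, and to reduce the developability condition to a scalar first-order ODE whose unique global solution yields the desired normal field; the only genuinely substantive point will be verifying that the resulting surface is locally nonplanar, and this is exactly where the hypothesis on $\gamma$ enters.

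First I would fix a relatively parallel (Bishop) frame $\{T, M_1, M_2\}$ along $\gamma$, where $T = \gamma'$ and $T' = k_1 M_1 + k_2 M_2$, $M_1' = -k_1 T$, $M_2' = -k_2 T$; such a frame exists globally for any smooth regular curve, so no assumption $\kappa \neq 0$ is needed. For an angle function $\beta \colon I \to \mathbb{R}$ I set $U = \cos\beta\, M_1 + \sin\beta\, M_2$ and take as ruling direction $e = \cos\varphi\, T + \sin\varphi\, U$ and as surface normal $V = T \times U$. By construction $e$ is a unit vector making angle $\varphi$ with $T$, so the ruling angle of the associated ruled surface is exactly $\varphi$; moreover $V$ is a unit field normal to $\gamma'$, and $V(t_0) = v$ holds for the unique $\beta_0 = \beta(t_0)$ (mod $2\pi$) determined by $v$, since $\{M_1(t_0), M_2(t_0)\}$ is an orthonormal basis of $\gamma'(t_0)^{\perp}$.

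Next I would impose flatness. Writing the ruled surface as $X(t,s) = \gamma(t) + s\, e(t)$, developability is equivalent to $\det(\gamma', e, e') = 0$. A short computation with the Bishop relations gives $\det(T, e, e') = \sin\varphi\,(\cos\varphi\,\langle V, \gamma''\rangle + \sin\varphi\,\beta')$, so that, as $\sin\varphi > 0$, flatness is equivalent to the scalar ODE $\beta' = \cot\varphi\,(k_1\sin\beta - k_2\cos\beta)$. Its right-hand side is smooth and, on the compact interval $I$, bounded and globally Lipschitz in $\beta$ (the coefficients $\cot\varphi$, $k_1$, $k_2$ are bounded and $\sin$, $\cos$ are Lipschitz); by Picard–Lindel\"of together with this a priori bound there is a unique solution $\beta$ defined on all of $I$ with $\beta(t_0) = \beta_0$. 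This $\beta$ determines $V$, $e$, and a flat ruled surface $D$ obtained by letting $s$ range over a symmetric interval about $0$; taking the range to be $[-W/(2\sin\varphi(t)), W/(2\sin\varphi(t))]$ for a sufficiently small constant $W$ makes the width constant, keeps $D$ compact, and (since $\sin\varphi$ is bounded below) guarantees that $X$ is an immersion along $\gamma$. Transversality of $\gamma$ to the rulings and the midpoint condition are immediate, because $\gamma$ sits at $s=0$ and $e$ is never tangent to $\gamma$. Finally, wherever $V' \neq 0$ the ruling $V^{\perp}\cap (V')^{\perp}$ is spanned by $e$, so $D$ coincides with $\mathcal{R}(V)$.

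The one point I expect to be the main obstacle is local nonplanarity of $D$. Here I would compute $V' = \langle V, \gamma''\rangle\,(\cot\varphi\, U - T)$, so that $V$ is constant precisely on the zero set of $\langle V, \gamma''\rangle = -k_1\sin\beta + k_2\cos\beta$. If this function vanished identically on some open interval $(a,b)$, then $V$ would be constant there; combined with $\langle \gamma', V\rangle \equiv 0$, this would force $\langle \gamma, V\rangle$ to be constant on $(a,b)$, so that $\gamma|_{(a,b)}$ would lie in a plane, contradicting the assumption that $\gamma$ is locally nonplanar. Hence $\langle V, \gamma''\rangle$ has no interior zeros, $D$ contains no planar open set, and $\mathcal{R}(V) = D$ is the desired flat ribbon with $V(t_0) = v$ and $\alpha(V) = \varphi$.
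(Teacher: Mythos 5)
Your proposal is correct and follows essentially the same route as the paper: reduce the flatness/ruling-angle condition to a first-order ODE for a rotation angle, solve it globally on the compact interval $I$ via a Lipschitz bound and Picard--Lindel\"{o}f, and use local nonplanarity of $\gamma$ to exclude planar pieces---your Bishop-frame gauge simply eliminates the inhomogeneous $\tau_{g}$ term from the paper's equation \eqref{newODE}, and prescribing the ruling direction $e$ directly sidesteps the continuous-extension argument for the ruling angle at zeros of the normal curvature. One small wording fix: your final argument shows that the zero set of $\langle V,\gamma''\rangle$ has empty interior (isolated zeros may well occur), which is exactly what local nonplanarity of $D$ requires, rather than that this function has ``no interior zeros.''
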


\begin{corollary}\label{mainCOR}
	Suppose that $\gamma$ is locally nonplanar, and let $\mathcal{R}(N)$ be a flat ribbon along $\gamma$. For any $t_{0} \in I$ and any unit vector $v \in \gamma'(t_{0})^{\perp}$, there exists a flat ribbon $\mathcal{R}(V)$ along $\gamma$, having the same width as $\mathcal{R}(N)$, such that $V(t_{0})=v$ and $\alpha(V) = \alpha(N)$.
\end{corollary}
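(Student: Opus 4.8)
The plan is to read off this corollary from Theorem~\ref{mainTH} by taking $\varphi = \alpha(N)$ and then adjusting the extent of the rulings so that the widths agree. First I would check that $\alpha(N)$ is an admissible prescribed ruling angle, i.e., a smooth function $I \to (0,\pi)$. That $\alpha(N)$ is smooth follows from the smoothness of the normal field $N$ defining $\mathcal{R}(N)$ together with the construction of the ruling angle recalled in Section~\ref{FlatRibbon}. That it is strictly positive is forced by condition~(2) in the definition of a flat ribbon: since $\gamma$ is transversal to every ruling, the ruling line is never tangent to $\gamma$, so $\alpha(N)(t) \neq 0$ for all $t$; and $\alpha(N)(t) < \pi$ holds by definition of the ruling angle. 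Hence $\alpha(N)$ is a legitimate choice of $\varphi$.

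With this choice, Theorem~\ref{mainTH} supplies a flat ribbon $\mathcal{R}(V)$ along $\gamma$ with $V(t_0) = v$ and $\alpha(V) = \alpha(N)$. The only remaining point is to arrange that $\mathcal{R}(V)$ has the same constant width $w$ as $\mathcal{R}(N)$. For this I would use the elementary relation between width, ruling angle, and ruling length: if $R(t)$ denotes the unit ruling direction, then its component in the normal plane $\gamma'(t)^{\perp}$ has length $\sin\alpha(t)$, so a ruling segment of half-length $\ell(t)$ centered at $\gamma(t)$ (as required by the midpoint condition) projects onto a segment of length $2\ell(t)\sin\alpha(t)$. Thus the width equals $2\ell(t)\sin\alpha(t)$, and prescribing it to be the constant $w$ amounts to setting $\ell(t) = w/(2\sin\alpha(t))$, which is well defined because $\sin\alpha(t) > 0$ on $I$.

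Since the ruling structure of $\mathcal{R}(V)$—its directions and its ruling angle—is already fixed by the theorem, while the length to which each ruling is extended is a free parameter in the construction, I would simply extend the rulings of $\mathcal{R}(V)$ symmetrically to half-length $\ell(t) = w/(2\sin\alpha(t))$. Because $\alpha(V) = \alpha(N)$, this produces a flat ribbon with the same constant width $w$ as $\mathcal{R}(N)$, still satisfying $V(t_0) = v$ and $\alpha(V) = \alpha(N)$, as required.

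The step I expect to require the most care is the last one: verifying that the rescaled ribbon remains a genuine flat ribbon in the sense of the Definition—locally nonplanar, compact, and embedded near $\gamma$—for the specific target width $w$, rather than merely for sufficiently small widths. I would handle this by appealing to the local nature of the construction (shrinking to a tubular neighborhood of $\gamma$ if necessary) and to the fact that changing the ruling length affects neither flatness nor the ruling angle; the genuinely substantive content, namely the freedom to prescribe $V(t_0)$ together with the ruling angle, is entirely contained in Theorem~\ref{mainTH}.
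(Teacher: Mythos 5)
Your route is the paper's route: Corollary~\ref{mainCOR} is intended as the special case $\varphi=\alpha(N)$ of Theorem~\ref{mainTH} (with Remark~\ref{ODEremark} recording equation \eqref{newODE2} as a shortcut ODE for exactly this construction), and the width needs no separate rescaling argument, since in the standard parametrization $\sigma(t,u)=\gamma(t)+u\left(H(t)+\mu(t)T(t)\right)$ the width is just the free parameter $2w$ measured along $H$; your relation ``width $=2\ell\sin\alpha$'' is the same statement, because the rulings of that parametrization have half-length $w\sqrt{1+\mu^{2}}=w/\sin\alpha$.

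One caveat in your write-up is the claim that $\alpha(N)$ is automatically smooth. Local nonplanarity allows isolated zeros of the curvature, hence of $\kappa_{n}$; there $\alpha(N)$ is only the continuous extension of $\ArcCot(-\tau_{g}/\kappa_{n})$, and the paper itself warns (remark after Corollary~\ref{secondaryCOR}, citing \cite{ushakov1996}) that the ruling angle of a flat ribbon may fail to be differentiable on a nowhere dense set. So Theorem~\ref{mainTH} cannot always be invoked verbatim with $\varphi=\alpha(N)$. The gap is easily patched: either work directly with \eqref{newODE2}, or observe that the proof of Theorem~\ref{mainTH} only uses continuity in $t$ plus a uniform Lipschitz bound coming from $\kappa_{n}\cot\varphi$ and $\kappa_{g}\cot\varphi$, and for $\varphi=\alpha(N)$ these equal $-\tau_{g}$ and $-\kappa_{g}\tau_{g}/\kappa_{n}$ (continuously extended), which are continuous and bounded on the compact interval $I$; so Picard--Lindel\"{o}f still applies. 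Finally, your worry about regularity of the new ribbon at the \emph{prescribed} width $2w$ (rather than for sufficiently small widths) is legitimate, but note that ``shrinking to a tubular neighborhood'' would change the width and so is not the right fix; this point is not addressed in the paper either, which treats the corollary as immediate.
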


\begin{corollary}\label{secondaryCOR}
	Suppose that $\gamma$ is locally nonplanar. The set of all flat ribbons of any fixed width along $\gamma$ admitting a smooth asymptotic parametrization is isomorphic to $C^{\infty}(I; (0,\pi)) \times \mathbb{S}^{1}$.
\end{corollary}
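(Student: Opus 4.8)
The plan is to exhibit an explicit bijection realizing the claimed isomorphism, and to reduce its bijectivity to Theorem~\ref{mainTH} together with a uniqueness statement for a first-order ODE. Fix the width $w$ once and for all. First I would set up a dictionary between flat ribbons of width $w$ along $\gamma$ admitting a smooth asymptotic parametrization and smooth unit normal fields $N$ along $\gamma$ with $\langle\gamma'',N\rangle\neq 0$: given such an $N$, the ribbon $\mathcal{R}(N)$ of width $w$ is the union of the ruling segments in the directions $N^{\perp}\cap N'^{\perp}$, symmetric about $\gamma$ and scaled so that the projection onto each normal plane has length $w$; conversely, the surface normal of such a ribbon along $\gamma$ is orthogonal to both $\gamma'$ and to the rulings (the asymptotic directions), hence equals $N$ up to a global sign, and the rulings recover the line field $N^{\perp}\cap N'^{\perp}$. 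The hypothesis that the ribbon admit a smooth asymptotic parametrization is precisely what guarantees that $N$ is smooth and that the ruling field is transversal to $\gamma$, i.e.\ $\langle\gamma'',N\rangle\neq 0$, equivalently $\alpha(N)\in(0,\pi)$.

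Next I would attach to each normal field the pair $(\alpha(N),N(t_{0}))\in C^{\infty}(I;(0,\pi))\times\mathbb{S}^{1}$, where $\mathbb{S}^{1}$ is the unit circle of the normal plane $\gamma'(t_{0})^{\perp}$. Surjectivity of the assignment $(\varphi,v)\mapsto\mathcal{R}(V)$ follows from Theorem~\ref{mainTH}, which realizes any prescribed $(\varphi,v)$ by a normal field $V$; the width-$w$ ribbon $\mathcal{R}(V)$ then maps back to $(\varphi,v)$. For injectivity I would show that $N$ is determined by $(\alpha(N),N(t_{0}))$: writing $N=\cos\theta\,e_{1}+\sin\theta\,e_{2}$ in a relatively parallel (Bishop) normal frame $\{e_{1},e_{2}\}$ with Bishop curvatures $k_{1},k_{2}$, a direct computation identifies the ruling direction with $\theta'\gamma'+\langle\gamma'',N\rangle\,\hat N$, where $\hat N$ is $N$ rotated by $\pi/2$ in the normal plane, so that the condition $\alpha(N)=\varphi$ becomes the first-order ODE
\[
\theta' = \Cot\varphi\cdot\langle\gamma'',N\rangle = \Cot\varphi\,(k_{1}\cos\theta+k_{2}\sin\theta).
\]
Its right-hand side is smooth in $t$ (as $\varphi(I)$ is a compact subset of $(0,\pi)$, so $\Cot\varphi$ is bounded) and Lipschitz in $\theta$, so by the Picard--Lindel\"of theorem the value $\theta(t_{0})$, equivalently $N(t_{0})=v$, determines $\theta$, hence $N$, uniquely on all of $I$. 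This gives a bijection $\{\text{valid normal fields}\}\cong C^{\infty}(I;(0,\pi))\times\mathbb{S}^{1}$.

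It then remains to descend to ribbons. Since $\mathcal{R}(N)$ and $\mathcal{R}(-N)$ are the same surface (the ruling line field, and hence $\alpha$, are invariant under $N\mapsto -N$, while $N(t_{0})\mapsto -N(t_{0})$), the bijection above descends, upon quotienting both sides by this involution, to a bijection $\{\text{flat ribbons of width }w\}\cong C^{\infty}(I;(0,\pi))\times(\mathbb{S}^{1}/\{\pm 1\})$. Finally, $\mathbb{S}^{1}/\{\pm 1\}=\mathbb{RP}^{1}$ is itself diffeomorphic to $\mathbb{S}^{1}$ (for instance via $z\mapsto z^{2}$ on the unit circle), which yields the asserted isomorphism.

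The main obstacle I anticipate is not analytic but structural: pinning down the dictionary of the first step rigorously, in particular verifying that ``admitting a smooth asymptotic parametrization'' is equivalent to the smoothness of $N$ together with $\alpha(N)\in(0,\pi)$, and tracking the sign ambiguity $N\sim -N$ consistently through both the ruling angle and the initial vector. Once that correspondence is established, surjectivity is immediate from Theorem~\ref{mainTH} and injectivity is the routine ODE-uniqueness argument above.
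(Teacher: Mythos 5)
Your overall scheme---identify a ribbon with a unit normal field $N$ along $\gamma$, send $N$ to the pair $(\alpha(N),N(t_{0}))$, get surjectivity from Theorem~\ref{mainTH} and injectivity from Picard--Lindel\"of uniqueness for the angle ODE written in a Bishop frame---is exactly the argument the corollary is meant to rest on, and those parts are fine (the sign in your ODE is a harmless convention, and uniqueness is all you need since global existence is supplied by the theorem). The genuine problem is in your descent step. You assert that the ruling angle is invariant under $N\mapsto -N$ because the ruling \emph{line} field is. The unoriented line field is indeed unchanged, but the ruling angle of the paper takes values in $(0,\pi)$ and is computed from an oriented ruling direction: by \eqref{RulingAngleFormula}, $\cot\alpha(N)=-\tau_{g}/\kappa_{n}$, and under $N\mapsto -N$ one has $\kappa_{n}\mapsto-\kappa_{n}$ while $\tau_{g}\mapsto\tau_{g}$, hence $\alpha(-N)=\pi-\alpha(N)$, not $\alpha(N)$. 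So the involution you must quotient by acts \emph{diagonally}, $(\varphi,v)\mapsto(\pi-\varphi,-v)$, and the quotient is a twisted bundle over $\mathbb{S}^{1}/\{\pm1\}$ with fibre $C^{\infty}(I;(0,\pi))$ and clutching map $\varphi\mapsto\pi-\varphi$; it is not the product $C^{\infty}(I;(0,\pi))\times\mathbb{RP}^{1}$, so your final step ``$\mathbb{RP}^{1}\cong\mathbb{S}^{1}$, done'' does not close the argument. Either you keep track of the orientation convention consistently (a ribbon together with its ruling angle valued in $(0,\pi)$ already singles out one of the two normals, so no quotient is taken at all and the pair $(\varphi,v)$ classifies directly), or, if you insist on quotienting point-set ribbons by $N\sim -N$, you need a further argument that the twisted quotient is still isomorphic to the product---which your proposal does not provide.

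A smaller inaccuracy: the hypothesis ``admitting a smooth asymptotic parametrization'' is not what makes $N$ smooth or the rulings transversal ($N$ is smooth for any smooth surface containing $\gamma$, and transversality is part of the definition of a flat ribbon). Its role, as the remark following Corollary~\ref{secondaryCOR} indicates, is to guarantee smoothness of the ruling direction---hence of $\alpha$---across the nowhere dense set of planar points of the ribbon, where the ruling field can otherwise fail to be differentiable. This does not affect the structure of your argument, but the equivalence you state in your first step should be formulated in those terms.
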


\begin{remark}
The nonplanarity assumption in Theorem~\ref{mainTH} allows $\gamma$ to have isolated points of vanishing curvature or torsion. It is only needed because we have excluded a planar strip to qualify as a flat ribbon. Indeed, if $\gamma$ is planar, then there exists a vector $v$ for which the corresponding ribbon degenerates into a planar strip.
\end{remark}

\begin{remark}
	The definition of $I$ as a \emph{closed} interval is essential for the validity of the theorem. Suppose for a moment that $I$ is an arbitrary interval. Then Theorem~\ref{mainTH} holds provided the functions $\kappa_{g} \cot(\varphi)$ and $\kappa_{n} \cot(\varphi)$ are bounded; see section \ref{ProofMainTH}. Without this extra hypothesis, we could yet prove the following local statement:
	for any $t_{0} \in I$ and any unit vector $v \in \gamma'(t_{0})^{\perp}$, there exists a neighborhood $I_{0}$ of $t_{0}$ and a flat ribbon $\mathcal{R}_{0}(V)$ along $\gamma \rvert_{I_{0}}$ such that $V(t_{0})=v$ and $\alpha_{0}(V) = \varphi \rvert_{I_{0}}$.
\end{remark}

\begin{remark}
	The extra assumption in Corollary~\ref{secondaryCOR} is needed because the ruling angle of a flat ribbon along $\gamma$, in the presence of planar points, may fail to be differentiable in a nowhere dense set; see \cite{ushakov1996}.
\end{remark}

To the best of the author's knowledge, Theorem~\ref{mainTH} has not appeared in the literature before. This is somewhat surprising, given the classical nature of the subject and the relative simplicity of the proof.

The proof of Theorem~\ref{mainTH}, which is based on the standard theory of ordinary differential equations, will be given in section \ref{ProofMainTH}. In particular, the proof offers a means to construct the solution by solving a nonlinear differential equation of first order; see Figure \ref{figureTorus}.

\begin{figure}[h]
	\centering
	
	\subfloat[$q=-\pi/2$]{\includegraphics[width=0.4\textwidth]{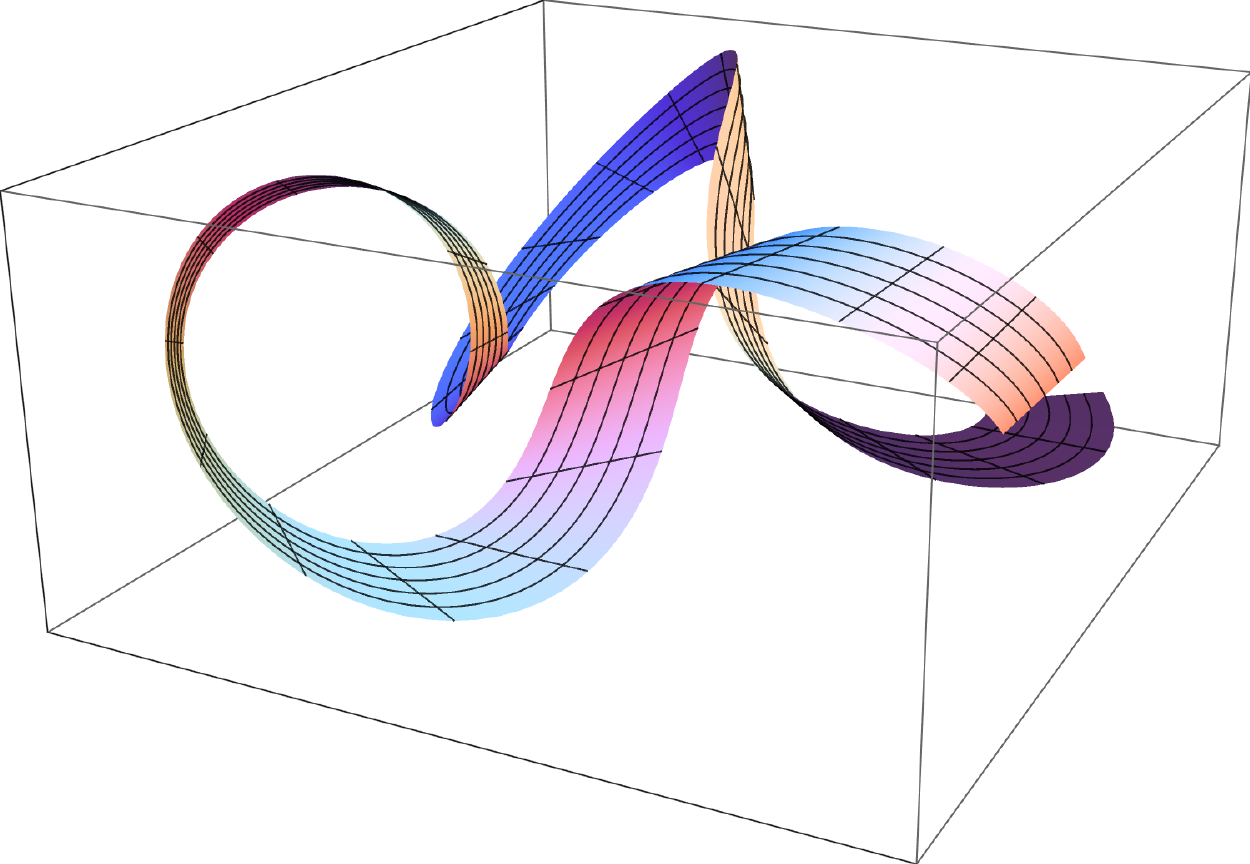}\label{torus1}} \qquad
	\subfloat[$q=-\pi/3$]{\includegraphics[width=0.4\textwidth]{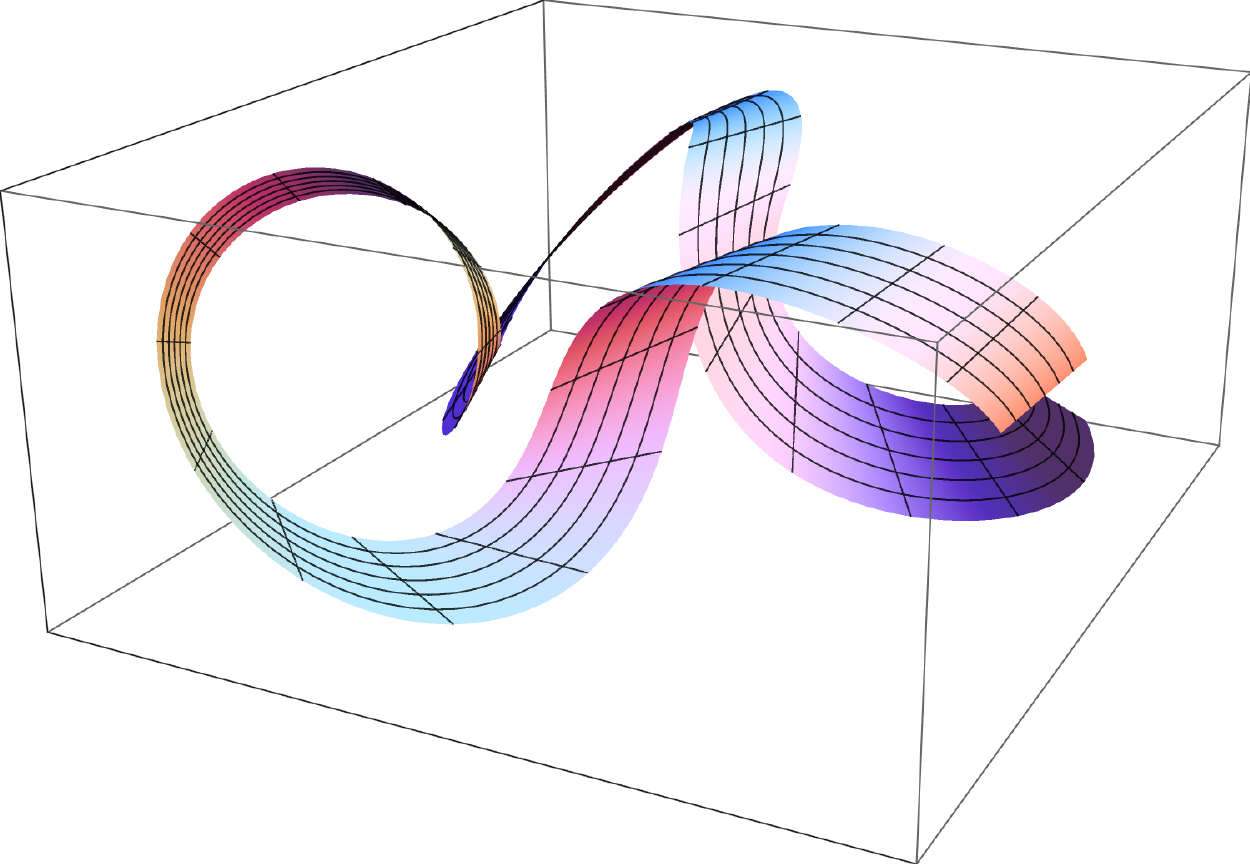}\label{torus2}}
	
	\subfloat[$q=-\pi/6$]{\includegraphics[width=0.4\textwidth]{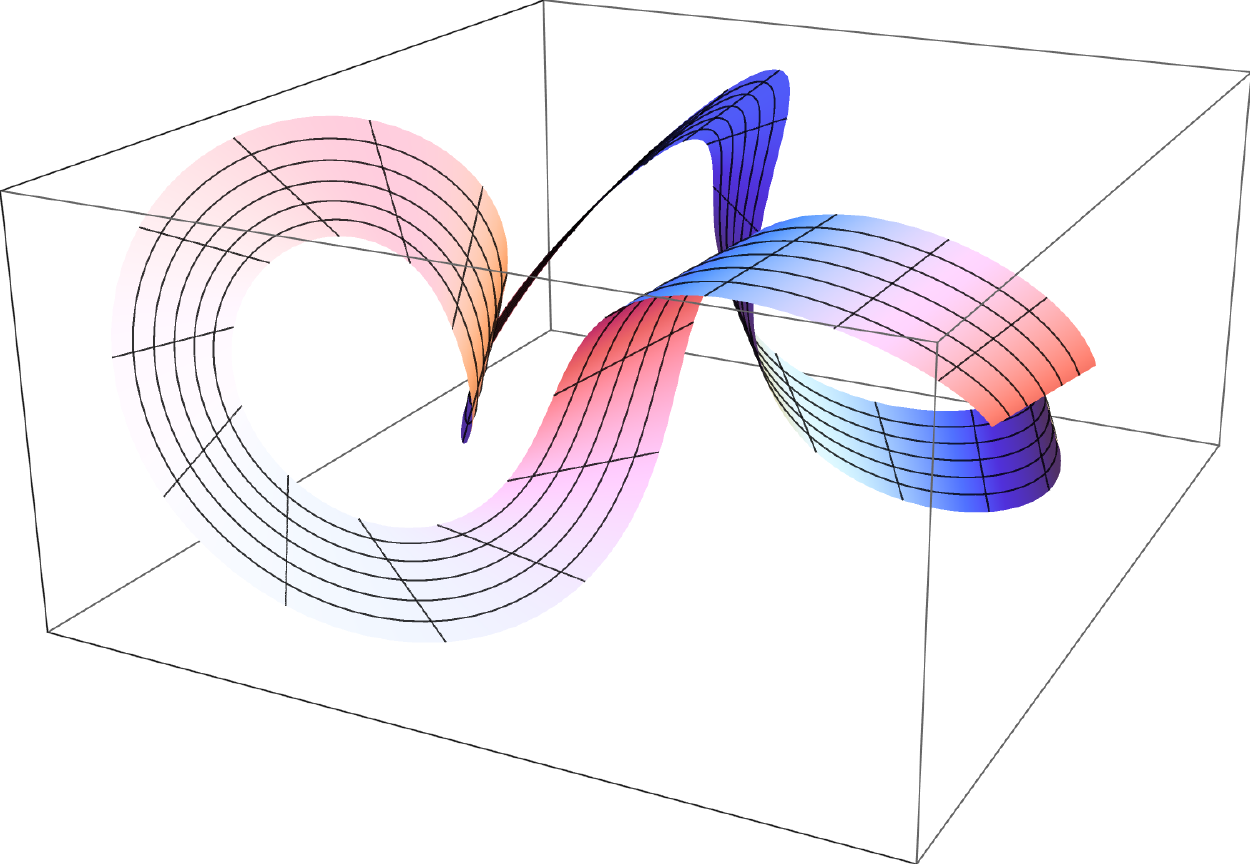}\label{torus3}}\qquad
	\subfloat[$q=0$]{\includegraphics[width=0.4\textwidth]{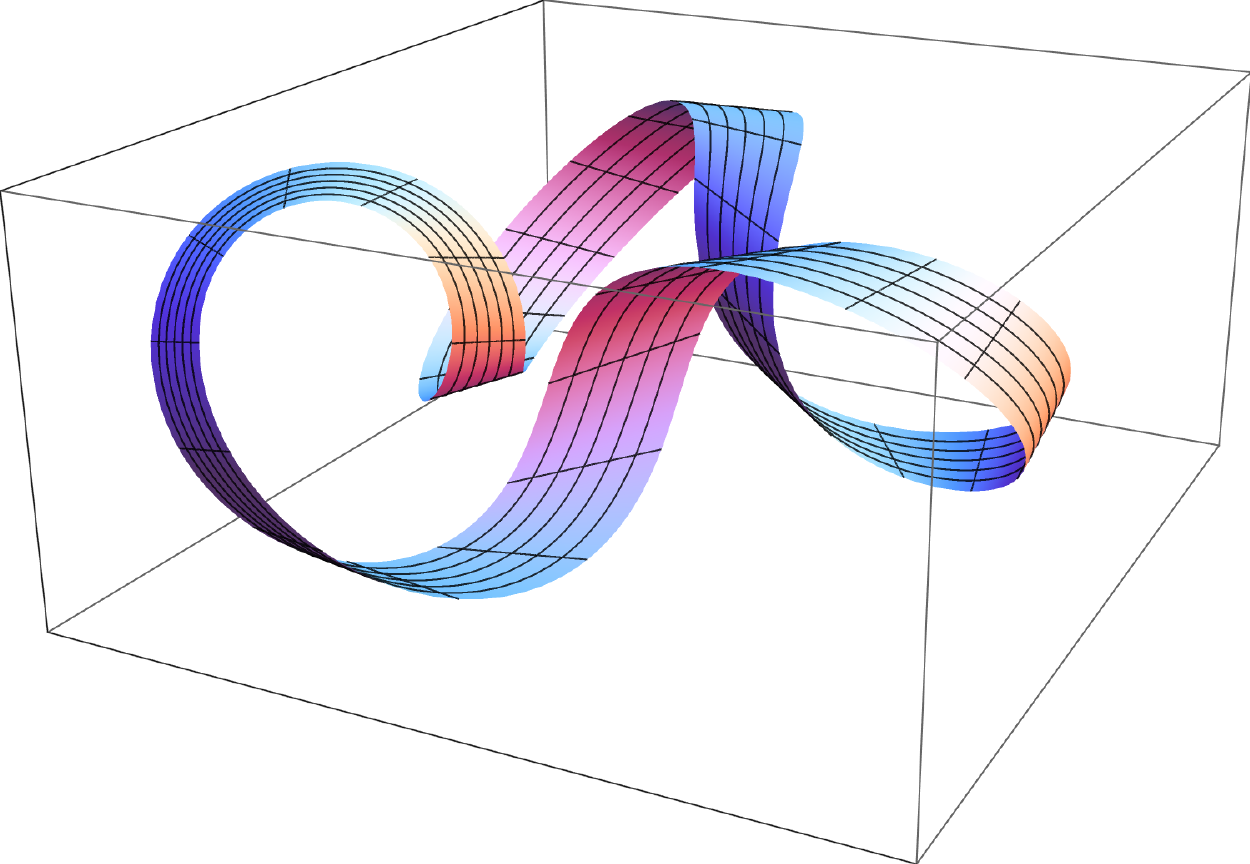}\label{torus4}}
	\caption[A]{Examples of flat ribbons along $\gamma$ having the same width and ruling angle. The curve $\gamma \colon [0,2\pi] \to \mathbb{R}^{3}$ is a trivial torus knot, while the ruling angle is induced by the unit normal vector of the torus; in other words, we are considering the ruling angle of a flat ribbon that is tangent to the torus along $\gamma$ (shown in plot \subref{torus4}). Each plot corresponds to a different initial condition $v \in \gamma'(0)^{\perp}$, obtained by rotating the normal vector of the torus at $\gamma(0)$ by an angle $q$. Plots \subref{torus1}, \subref{torus2}, and \subref{torus3} are generated by solving numerically equation \eqref{newODE2}.}\label{figureTorus}
\end{figure}

It is worth emphasizing that any two flat ribbons $\mathcal{R}(N_{1})$ and $\mathcal{R}(N_{2})$ are \emph{locally} isometric, by Minding's theorem. More precisely, for any $p_{1} \in \mathcal{R}(N_{1})$ and $p_{2} \in \mathcal{R}(N_{2})$, there exist neighborhoods $\mathcal{U}_{1}$ of $p_{1}$, $\mathcal{U}_{2}$ of $p_{2}$ and an isometry $\mathcal{U}_{1} \to \mathcal{U}_{2}$. On the other hand, if $\mathcal{R}(N_{1})$ and $\mathcal{R}(N_{2})$ have the same ruling angle, then in general they are \emph{not} globally isometric. This can be deduced from the fact that the geodesic curvatures of $\gamma$ relative to $\mathcal{R}(N_{1})$ and $\mathcal{R}(N_{2})$ are typically different; see Remark~\ref{kappaGremark}.

The second objective of the paper is to understand the set of flat ribbons along $\gamma$ in terms of energy. In 1930, Sadowsky \cite{sadowsky1930, hinz2015} argued that the bending energy $\int_{D} H^{2} \,dA$ of the rectifying developable of $\gamma$, in the limit of infinitely small width, is proportional to
\begin{equation*}
\int_{0}^{L} \kappa^{2}\left(1+\mu^{2}\right)^{2} \, dt.
\end{equation*}
Here $\kappa >0$ is the curvature of $\gamma$ and $\mu = -\tau/\kappa$, where $\tau$ is the torsion. Sadowsky's claim was formally justified by Wunderlich \cite{wunderlich1962, todres2015}.

In section \ref{ProofMainTH} we will prove that Sadowsky's result extends virtually unchanged to any flat ribbon along $\gamma$; cf.\ \cite{efrati2015}.
\begin{theorem}\label{EnergyTH}
If $\mathcal{R}(N)$ has width $2w$, then its bending energy $\mathcal{E}(\mathcal{R}(N)) =\mathcal{E}(N)$ satisfies
	\begin{equation}\label{EnergyFormula}
		\lim_{w \to 0} \mathcal{E}(N)= \lim_{w \to 0}\frac{w}{2}\int_{0}^{L} \kappa_{n}^{2}\left(1+\cot(\alpha(N))^{2}\right)^{2} \, dt,
	\end{equation} 
	where $\kappa_{n}$ is the normal curvature of $\gamma$ with respect to $N$, as defined in section \ref{DarbouxFrame}.
\end{theorem}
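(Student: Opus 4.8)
The plan is to parametrize the ribbon by the Darboux frame of $\gamma$ and reduce the surface integral $\int_D H^2\,dA$ to an ordinary integral over $[0,L]$ whose small-width asymptotics can be read off directly. Write $\{T,\eta,N\}$ for the Darboux frame relative to $\mathcal{R}(N)$, so that $T=\gamma'$, $N$ is the surface unit normal, and $\eta=N\times T$; let $\kappa_g$, $\kappa_n$, $\tau_g$ be the associated geodesic curvature, normal curvature, and geodesic torsion. Since each ruling is orthogonal to $N$ and makes angle $\alpha=\alpha(N)$ with $T$, the unit ruling field is $r=\cos\alpha\,T+\sin\alpha\,\eta$, and I would parametrize $D$ by $x(t,v)=\gamma(t)+v\,r(t)$. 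Projecting $r$ onto $\gamma'^{\perp}$ turns the constant-width condition into the $v$-range $[-w/\sin\alpha,\,w/\sin\alpha]$; here $\sin\alpha>0$ on the compact interval $I$ because $\alpha$ takes values in $(0,\pi)$.

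The geometric heart of the computation is the developability constraint. Since $r\perp N'$, the Darboux equations give $\langle N',r\rangle=-\kappa_n\cos\alpha-\tau_g\sin\alpha=0$, i.e.\ $\tau_g=-\kappa_n\cot\alpha$; this identity relating the ruling angle to the frame specializes to Sadowsky's $\mu=-\tau/\kappa$ for the rectifying developable. Differentiating $r$ and inserting this constraint, the $N$-component of $r'$ drops out, leaving $r'=\beta\,r^{\perp}$ with $r^{\perp}=-\sin\alpha\,T+\cos\alpha\,\eta$ and $\beta=\alpha'+\kappa_g$. Consequently $x_t\times x_v=(\sin\alpha-v\beta)N$, so the surface normal is constant along each ruling (confirming flatness) and $EG-F^2=(\sin\alpha-v\beta)^2$.

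Next I would compute the second fundamental form with respect to $N$. Because $x_{vv}=0$ and $\langle r',N\rangle=0$, both $g$ and $f$ vanish, so $K=0$ and $H=e/\bigl(2(EG-F^2)\bigr)$. A short computation using $\tau_g=-\kappa_n\cot\alpha$ gives $e=\langle x_{tt},N\rangle=\kappa_n(\sin\alpha-v\beta)/\sin\alpha$, whence the single cancellation that drives the whole result: $H=\kappa_n/\bigl(2\sin\alpha(\sin\alpha-v\beta)\bigr)$. Combining this with $dA=(\sin\alpha-v\beta)\,dt\,dv$ yields $H^2\,dA=\kappa_n^2\,dt\,dv/\bigl(4\sin^2\alpha\,(\sin\alpha-v\beta)\bigr)$, and integrating in $v$ over $[-w/\sin\alpha,\,w/\sin\alpha]$ produces the exact formula $\mathcal{E}(N)=\int_0^L \frac{\kappa_n^2}{4\beta\sin^2\alpha}\ln\!\frac{\sin^2\alpha+\beta w}{\sin^2\alpha-\beta w}\,dt$.

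Finally, I would extract the leading asymptotics. Expanding the logarithm gives $\ln\frac{\sin^2\alpha+\beta w}{\sin^2\alpha-\beta w}=2\beta w/\sin^2\alpha+O(w^3)$, so $\mathcal{E}(N)/w\to \tfrac12\int_0^L \kappa_n^2/\sin^4\alpha\,dt$, and the identity $1/\sin^2\alpha=1+\cot^2\alpha$ rewrites this as the claimed $\tfrac12\int_0^L\kappa_n^2(1+\cot^2\alpha)^2\,dt$. The main technical point, rather than the algebra, is justifying that the small-$w$ limit may be taken under the integral sign: this follows from dominated convergence, the integrand being continuous in $(t,w)$ on the compact set $I\times[0,w_0]$, once one checks that the apparent singularity where $\beta=0$ is removable (there $\tfrac1\beta\ln\frac{\sin^2\alpha+\beta w}{\sin^2\alpha-\beta w}\to 2w/\sin^2\alpha$) and that $\sin\alpha$ is bounded away from $0$.
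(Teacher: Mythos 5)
Your proposal is correct and follows essentially the same route as the paper: after the substitution $\mu=\cot\alpha$, $v=u/\sin\alpha$, your parametrization by the unit ruling field is exactly the paper's $\sigma(t,u)=\gamma(t)+u\left(\mu T+H\right)$, and your exact finite-width formula $\mathcal{E}(N)=\int_0^L \frac{\kappa_n^2}{4\beta\sin^2\alpha}\ln\frac{\sin^2\alpha+\beta w}{\sin^2\alpha-\beta w}\,dt$ coincides with the paper's logarithmic expression with $\lambda=-\beta/\sin^2\alpha$. The only (minor) divergence is in justifying the interchange of limit and integral, where you invoke joint continuity on $I\times[0,w_0]$ and dominated convergence instead of the paper's explicit uniform-convergence estimate; both arguments are valid, though like the paper you should state the reduction to $\kappa_n\neq 0$ (so that $\alpha$ is differentiable and $\beta=\alpha'+\kappa_g$ makes sense) at the outset.
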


\begin{remark}
The function $\cot(\alpha(N))$ agrees with $\mu = -\tau_{g}/\kappa_{n}$ on the subset $I \setminus \kappa_{n}^{-1}(\{0\})$, which is dense in $I$; here $\tau_{g}$ is the geodesic torsion of $\gamma$ with respect to $N$. Thus $\cot(\alpha(N))$ is the unique continuous extension of $\mu$ to $I$.
\end{remark}

Theorem~\ref{EnergyTH} tells us that, for any ruling angle, the ribbon in which $\gamma$ has the least energy ($=\int_{0}^{L}\kappa_{g}^{2}\,dt$, where $\kappa_{g}$ is the geodesic curvature of $\gamma$ with respect to $N$) costs the most energy, and vice versa. Hence when $\kappa > 0$ we obtain: \emph{among all infinitely narrow flat ribbons along $\gamma$ having ruling angle $\alpha(T'/\lVert T'\rVert)$, the rectifying developable of $\gamma$ has the maximum bending energy.}

More generally, the following corollary applies.

\begin{corollary}\label{EnergyCOR}
If $\mathcal{R}(N)$ and $\mathcal{R}(V)$ are flat ribbons along $\gamma$ with the same ruling angle, then, in the limit of infinitely small widths, their bending energies $\mathcal{E}(N)$ and $\mathcal{E}(V)$ satisfy
\begin{equation*}
\mathcal{E}(V) \leq \mathcal{E}(N) + \frac{w}{2}\int_{0}^{L} \kappa_{g}^{2}\left(1 + \cot(\alpha(N))^{2}\right)^{2} \, dt,
\end{equation*}
where $\kappa_{g}$ is the geodesic curvature of $\gamma$ with respect to $N$, as defined in section 2.
In particular, if the normal curvature $\kappa_{n}$ of $\gamma$ with respect to $N$ is always nonzero, then 
\begin{equation*}
\frac{\mathcal{E}(V)}{\mathcal{E}(N)} \leq 1 + \max \rho^{2} = 1 + \max \tan (\phi)^{2},
\end{equation*}
where $\rho = \kappa_{g}/\kappa_{n}$ and $\phi$ is the angle between $N$ and $\gamma''$.
\end{corollary}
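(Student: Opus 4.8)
The plan is to derive everything from Theorem~\ref{EnergyTH} together with the single structural fact that, for the curve $\gamma$ equipped with \emph{any} unit normal field, the geodesic and normal curvatures obey the Pythagorean identity $\kappa^{2} = \kappa_{g}^{2} + \kappa_{n}^{2}$, where $\kappa$ is the ambient curvature of $\gamma$ and hence independent of the chosen ribbon. No genuine estimation is needed beyond this.

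First I would record the energy asymptotics for both ribbons. Writing $\kappa_{n}, \kappa_{g}$ for the normal and geodesic curvatures of $\gamma$ with respect to $N$, and $\kappa_{n}(V), \kappa_{g}(V)$ for the corresponding quantities with respect to $V$, Theorem~\ref{EnergyTH} gives, in the limit of small width, $\mathcal{E}(N) \sim \tfrac{w}{2}\int_{0}^{L}\kappa_{n}^{2}(1+\cot(\alpha(N))^{2})^{2}\,dt$ and likewise for $V$. Because the two ribbons share the same ruling angle, $\cot(\alpha(V)) = \cot(\alpha(N))$, so the nonnegative weight $(1+\cot(\alpha(N))^{2})^{2}$ is common to both integrals, and the comparison collapses to a pointwise comparison of $\kappa_{n}(V)^{2}$ against $\kappa_{n}^{2}$. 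The crux is then immediate: applying the Pythagorean identity to each field yields $\kappa_{n}(V)^{2} = \kappa^{2} - \kappa_{g}(V)^{2} \leq \kappa^{2} = \kappa_{n}^{2} + \kappa_{g}^{2}$. Substituting the bound $\kappa_{n}(V)^{2} \leq \kappa_{n}^{2} + \kappa_{g}^{2}$ into the energy integral for $V$ and splitting off the $\kappa_{n}^{2}$ term reproduces $\mathcal{E}(N)$ and leaves exactly the stated remainder $\tfrac{w}{2}\int_{0}^{L}\kappa_{g}^{2}(1+\cot(\alpha(N))^{2})^{2}\,dt$, which proves the first inequality.

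For the second assertion, the hypothesis $\kappa_{n}\neq 0$ makes $\mathcal{E}(N)$ a strictly positive multiple of $w$, so division is allowed. I would substitute $\kappa_{g}^{2} = \rho^{2}\kappa_{n}^{2}$ with $\rho = \kappa_{g}/\kappa_{n}$ into the remainder integral and bound $\rho^{2} \leq \max\rho^{2}$ pointwise; since the leftover integrand $\kappa_{n}^{2}(1+\cot(\alpha(N))^{2})^{2}$ is nonnegative and is precisely what defines $\mathcal{E}(N)$, the remainder is at most $(\max\rho^{2})\,\mathcal{E}(N)$, giving $\mathcal{E}(V)/\mathcal{E}(N) \leq 1 + \max\rho^{2}$. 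The final equality follows by resolving $\gamma''$ in the Darboux frame: $\langle\gamma'',N\rangle = \kappa_{n}$ forces $\cos\phi = \kappa_{n}/\kappa$, while the component of $\gamma''$ orthogonal to $N$ has length $\lvert\kappa_{g}\rvert$, so $\tan(\phi)^{2} = \kappa_{g}^{2}/\kappa_{n}^{2} = \rho^{2}$ and thus $\max\rho^{2} = \max\tan(\phi)^{2}$.

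The mathematics here is light, so I do not anticipate a real obstacle; the only place demanding care is bookkeeping. One must read the inequality as a statement about the leading-order coefficients in $w$—both sides carry the common factor $w/2$ that vanishes in the strict limit—and must keep the two fields' geodesic/normal splittings rigorously separate while exploiting that their squared sums coincide. Once Theorem~\ref{EnergyTH} and the Pythagorean identity are invoked, everything reduces to a pointwise inequality integrated against a fixed nonnegative weight.
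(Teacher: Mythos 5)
Your proposal is correct and follows essentially the same route as the paper: invoke Theorem~\ref{EnergyTH} for both ribbons, use the common ruling angle to share the weight $(1+\cot(\alpha(N))^{2})^{2}$, bound $\kappa_{n}(V)^{2}\leq\kappa^{2}=\kappa_{n}^{2}+\kappa_{g}^{2}$, and split the integral. The only (immaterial) difference is in the last step, where you bound $\rho^{2}$ pointwise by $\max\rho^{2}$ instead of invoking the first mean value theorem for integrals as the paper does; both yield the same estimate.
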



The plan of the paper is as follows. The next two sections present the preliminaries needed for the proof of Theorem~\ref{mainTH}, which is carried out in section~\ref{ProofMainTH}. In section~\ref{BendingEnergy} we then proceed with the proofs of Theorem~\ref{EnergyTH} and Corollary~\ref{EnergyCOR}. In the subsequent section we derive further results by considering two natural choices of ruling angle. Finally, in section~\ref{Helix} we specialize the discussion to the case where the curve $\gamma$ is a circular helix.

This work joins several other recent studies on ribbons; see e.g.\ \cite{bohr2013, fosdick2016, freddi2018, seguin2020}. In particular, the problem of constructing flat surfaces along a given curve has also been considered in \cite{zhao2008, izumiya2015, hananoi2017, raffaelli2018}; interesting applications of Sadowsky's energy formula can be found in \cite{chubelaschwili2010, giomi2010, dias2014, starostin2018}.

In fact, a closely related work \cite{seguin2021} appeared shortly before the first version of this paper was completed. By basing their analysis on the geodesic curvature---rather than the ruling angle---the authors in \cite{seguin2021} offer an alternative description of the surfaces studied here.

\section{The Darboux frame}\label{DarbouxFrame}
We begin by defining the Darboux frame. Classically, that is a natural frame along a surface curve. For our purposes, the surface is not important, only the normal vector is.

Let $N$ be a (smooth) unit normal vector field along $\gamma$, let $T$ be the unit tangent vector $\gamma'$ of $\gamma$, and let $H=N\times T$. We define
\begin{itemize}[label={--}]
\item the \textit{Darboux frame of $\gamma$ with respect to $N$} to be the triple $(T, H, N)$;
\item the \textit{geodesic curvature $\kappa_{g}$ of $\gamma$ with respect to $N$} by $\kappa_{g} = \langle T', H \rangle$;
\item the \textit{normal curvature $\kappa_{n}$ of $\gamma$ with respect to $N$} by $\kappa_{n} = \langle T', N \rangle$;
\item the \textit{geodesic torsion $\tau_{g}$ of $\gamma$ with respect to $N$} by $\tau_{g} = \langle H', N \rangle$.
\end{itemize}

Since $(T,H,N)$ is a frame along $\gamma$, we may express the derivative of any of its elements in terms of the frame itself. In fact, being $(T,H,N)$ orthonormal, it is easy to verify that the following equations hold:
\begin{equation*} 
\begin{pmatrix}
T' \\
H'\\
N'\\
\end{pmatrix}
= 
\begin{pmatrix}
0 & \kappa_{g} & \kappa_{n}\\
-\kappa_{g} & 0 & \tau_{g}\\
-\kappa_{n} & -\tau_{g} & 0\\
\end{pmatrix}
\begin{pmatrix}
T \\
H\\
N\\
\end{pmatrix} .
\end{equation*}

\section{Constructing a flat ribbon}\label{FlatRibbon}

The Darboux frame is a useful tool for constructing a flat ribbon normal to $N$ along $\gamma$, in that it permits to prescribe its width, which by definition is measured along the vector field $H$.

\begin{theorem}[{\cite[pp.~195--197]{docarmo1976}}, \cite{izumiya2015, raffaelli2018}]\label{existenceTH}
Suppose that $\kappa_{n}(t) \neq 0$ for all $t\in I$. Then there exists $w>0$ and a unique flat ribbon of width $2w$ normal to $N$ along $\gamma$. Such ribbon is parametrized by $\sigma \colon I \times [-w,w] \to \mathbb{R}^{3}$,
\begin{equation*}
\sigma(t,u) = \gamma(t) +u \left( H(t) +\mu(t)T(t)\right),
\end{equation*}
where $\mu=-\tau_{g}/\kappa_{n}$. Conversely, if $\mathcal{R}(N)$ is a flat ribbon normal to $N$ along $\gamma$, then $\tau_{g}(t) = 0$ for all $t \in I$ such that $\kappa_{n}(t)=0$.
\end{theorem}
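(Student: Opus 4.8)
The plan is to treat the ruling field as the basic unknown and reduce both assertions to a single algebraic identity coming from the developability condition. Write an arbitrary ruled surface normal to $N$ along $\gamma$ as $\sigma(t,u) = \gamma(t) + uR(t)$ with ruling direction $R$. First I would argue that the two defining features ``normal to $N$'' and transversality constrain $R$ severely: the surface normal along $\gamma$ is $\sigma_{t} \times \sigma_{u}\rvert_{u=0} = T \times R$, and requiring this to be parallel to $N = T \times H$ forces the $N$-component of $R$ to vanish, so $R \in \Span(T,H)$; transversality of $\gamma$ to the rulings then gives a nonzero $H$-component, and after normalizing that component to $1$ we may write $R = \mu T + H$ for a single scalar function $\mu$. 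In this normal form the remaining ribbon axioms are immediate: the orthogonal projection of each ruling onto the normal plane $T^{\perp}$ is the segment $u \mapsto uH$, $u \in [-w,w]$, of length $2w$, so the width equals $2w$ and is constant, while $\sigma(t,0) = \gamma(t)$ places $\gamma$ at the midpoint of every ruling. Thus everything reduces to understanding when $\sigma$ is developable.

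The core of the proof is the computation of the developability condition $\det(\gamma', R, R') = 0$. I would differentiate $R = \mu T + H$ using the Darboux structure equations of section~\ref{DarbouxFrame}, obtaining $R' = (\mu' - \kappa_{g})T + \mu\kappa_{g}H + (\mu\kappa_{n} + \tau_{g})N$, and then expand the determinant in the positively oriented orthonormal frame $(T,H,N)$. Since both $T$ and $R$ have vanishing $N$-component, the expansion collapses to a single surviving entry, giving
\[
\det(\gamma', R, R') = \mu\kappa_{n} + \tau_{g}.
\]
Hence $\sigma$ is developable if and only if $\mu\kappa_{n} + \tau_{g} = 0$ at every point.

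For the existence and uniqueness statement, the hypothesis $\kappa_{n} \neq 0$ lets me solve this equation uniquely for $\mu = -\tau_{g}/\kappa_{n}$, a smooth function; this pins down the ruling direction and hence the parametrization claimed in the theorem. To produce an honest $w > 0$ I would use that $\sigma_{t} \times \sigma_{u} = N + u(R' \times R)$ equals $N \neq 0$ at $u = 0$, so by smoothness and compactness of $I$ it stays nonzero on $I \times [-w,w]$ for $w$ small enough, making $\sigma$ a regular (and, after possibly shrinking $w$, embedded) compact surface. Local nonplanarity then comes for free: along each ruling the surface normal is the constant direction $N(t)$, and $N' = -\kappa_{n}T - \tau_{g}H \neq 0$ wherever $\kappa_{n} \neq 0$, so no open subset can be planar. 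Together these verify every condition in the definition of a flat ribbon.

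For the converse I would run the same reduction on a given flat ribbon normal to $N$: its ruling field again takes the form $R = \mu T + H$, and developability forces $\mu\kappa_{n} + \tau_{g} = 0$ throughout $I$; evaluating this identity at a point where $\kappa_{n} = 0$ immediately yields $\tau_{g} = 0$. I expect the main obstacle to lie precisely here, in the converse: unlike the existence part, one cannot invoke $\kappa_{n} \neq 0$, so one must first guarantee that the ruling field of the given surface is well defined and smooth enough for $\mu$ to be a genuine, bounded function near the zeros of $\kappa_{n}$. This is a regularity issue for the rulings of a smooth, locally nonplanar developable surface---the same phenomenon flagged in the remark following Corollary~\ref{secondaryCOR} and in \cite{ushakov1996}---and it is the step I would treat most carefully, whereas the determinant identity and the smallness-of-$w$ argument are routine.
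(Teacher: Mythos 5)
Your proposal is correct and follows essentially the same route as the paper: writing the ruling as $R=\mu T+H$ and imposing developability via $\det(T,R,R')=\mu\kappa_{n}+\tau_{g}=0$ is just the Darboux-frame restatement of the paper's condition $X(t)\in N(t)^{\perp}\cap N'(t)^{\perp}$ (spanned by $N'\times N=\kappa_{n}H-\tau_{g}T$), and the converse is handled identically, by noting that a zero of $\kappa_{n}$ with $\tau_{g}\neq 0$ would force the ruling parallel to $T$ and hence a singularity at $\gamma(t)$. Your additional verification of the width, midpoint, regularity-for-small-$w$, and local nonplanarity conditions is a harmless elaboration of details the paper leaves implicit, and the regularity caveat you flag in the converse is likewise glossed over by the paper itself (it can be sidestepped pointwise using the self-adjointness of the shape operator, since the ruling direction lies in its kernel).
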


For the reader's convenience, we give a short proof of the theorem.

\begin{proof}[Proof of Theorem~\ref{existenceTH}]
Given a vector field $X$ along $\gamma$, let $\sigma$ be defined by
\begin{align*}
	\sigma \colon I \times \mathbb{R} &\to \mathbb{R}^{3}\\
	(t, u) &\mapsto \gamma(t) + u X(t).
\end{align*}
Recall that $\sigma$ is flat exactly when $T$, $X$, and $X'$ are everywhere linearly dependent; accordingly, we need to find $X$ such that
\begin{equation}\label{SystemX}
	\begin{cases}
		\langle X(t), N(t) \rangle = 0,\\
		\langle X(t) \times T(t), X'(t)\rangle =0,\\
		X(t) \times T(t) \neq 0
	\end{cases}
\end{equation}
for all $t \in I$. Note that the first two equations are equivalent to $X(t) \in N(t)^{\perp} \cap N'(t)^{\perp}$. 

Suppose that $\kappa_{n}(t) \neq 0$. Then $N'(t) = -\kappa_{n}(t)T(t) -\tau_{g}(t)H(t) \neq 0$; the intersection $N(t)^{\perp} \cap N'(t)^{\perp}$ has dimension one and is spanned by 
		\begin{equation*}
N'(t) \times N(t) = \kappa_{n}(t)H(t) - \tau_{g}(t)T(t),
	\end{equation*}
as desired. 

Conversely, suppose that $\mathcal{R}(N)$ is a flat ribbon normal to $N$ along $\gamma$. Then $\mathcal{R}(N)$ lies in the image of $\sigma$ for some $X$ satisfying \eqref{SystemX}. Hence $\tau_{g}(t) = 0$ whenever $\kappa_{n}(t) =0$, because otherwise $\mathcal{R}(N)$ would be singular at $\gamma(t)$.
\end{proof}

\begin{remark}
If $\kappa_{n}(t) \neq 0$, then the ruling angle $\alpha(N)$ of $\mathcal{R}(N)$ satisfies
\begin{equation}\label{RulingAngleFormula}
\alpha(N(t)) = \ArcCot \left(-\tau_{g}(t)/\kappa_{n}(t)\right).
\end{equation}
\end{remark}

\begin{remark}
	In the spirit of \cite{nomizu1959, randrup1996}, the existence condition in Theorem~\ref{existenceTH} can be weakened as follows. For all $t \in I$, we require that
	\begin{enumerate}[label=(\roman*)]
		\item there exists $l \in \mathbb{N}_{0}$ such that the $l$th derivative $\kappa_{n}^{(l)}$ is nonzero at $t$. This implies, in particular, that every zero of $\kappa_{n}$ is isolated;
		\item $\tau_{g}^{(0)}(t) = \dotsb = \tau_{g}^{(l-1)}(t) = 0$.
	\end{enumerate}

These two conditions guarantee that, if $\kappa_{n}(t)=0$, then $\lim_{z\to t} \tau_{g}(z)/\kappa_{n}(z)$ is well-defined. In fact, it is not difficult to verify that the continuous extension of $\tau_{g}/\kappa_{n}$ to $I$---obtained by setting $\tau_{g}(t)/\kappa_{n}(t) =\lim_{z\to t} \tau_{g}(z)/\kappa_{n}(z)$ whenever $\kappa_{n}(t)=0$---is smooth.
	\end{remark}

\section{Proof of Theorem~\ref{mainTH}}\label{ProofMainTH}

We are now ready to prove Theorem~\ref{mainTH}.

Given any unit normal vector field $N$ along $\gamma$, let the Darboux frame of $\gamma$ with respect to $N$ rotate around the tangent $T$ by a smooth function $\theta \colon I \to \mathbb{R}$:
\begin{align*}
H(\theta) &= \cos (\theta) H + \sin(\theta) N,\\
N(\theta) &= -\sin (\theta) H + \cos (\theta) N.
\end{align*}

The normal curvature of $\gamma$ with respect to $N(\theta)$ is given by
\begin{align}
	\kappa_{n}(\theta)&= \left< T', N(\theta) \right>\nonumber \\
	&= \left< T', -\sin (\theta) H + \cos (\theta) N \right>\nonumber \\
	&= -\kappa_{g}\sin (\theta)  + \kappa_{n}\cos(\theta).\label{kappaNtheta}
\end{align}

Similarly, the geodesic torsion of $\gamma$ with respect to $N(\theta)$ is given by
\begin{align*}
	\tau_{g}(\theta)&= \left< H'(\theta), N(\theta) \right> \\
	&=  \left< H'(\theta), -\sin (\theta) H + \cos (\theta) N  \right>.
\end{align*}
We first compute
\begin{align*}
	H'(\theta) &= \theta'\cos (\theta) N + \sin(\theta) N' -\theta' \sin(\theta) H + \cos(\theta)H'\\
	&= \theta'\cos (\theta) N - \sin(\theta) \left(\kappa_{n}T +\tau_{g}H\right)-\theta' \sin(\theta) H + \cos(\theta) \left(- \kappa_{g} T + \tau_{g}N \right)\\
	&=  -\left(\cos(\theta)\kappa_{g} +\sin(\theta) \kappa_{n}\right) T -\sin(\theta) \left( \theta' +\tau_{g}\right) H +\cos(\theta) \left( \theta' + \tau_{g}\right) N.
\end{align*}
Let us also compute
\begin{equation*}
 -\sin (\theta) \left< H'(\theta), H\right> =\sin (\theta)^{2} \left(\theta' + \tau_{g}\right),
\end{equation*}
and
\begin{equation*}
\cos (\theta) \left< H'(\theta), N\right> =\cos (\theta)^{2} \left(\theta' + \tau_{g}\right).
\end{equation*}
It follows that
\begin{equation}\label{tauGtheta}
\tau_{g}(\theta)=\theta' + \tau_{g}.
\end{equation}

Next, let $\varphi$ be a smooth function $I \to (0,\pi)$, and assume that $\gamma$ is locally nonplanar. 
We claim that, if the condition
\begin{equation}\label{ODE}
\kappa_{n}(\theta)\cot(\varphi)  +\tau_{g}(\theta)=0
\end{equation}	
holds, then the ribbon $\mathcal{R}(N(\theta))$ is well-defined, and its ruling angle is exactly $\varphi$. To verify the claim, note that the zero set of $\kappa_{n}(\theta)$ is nowhere dense in $I$, because otherwise there would be an interval where both $\kappa_{n}(\theta)$ and $\tau_{g}(\theta)$ vanish, contradicting the assumption of local nonplanarity; thus the ruling angle $\alpha(N(\theta))$ is well-defined on a dense subset, where it agrees with $\varphi$, and so it admits a unique continuous extension to the entire interval.

Substituting the expressions of $\kappa_{n}(\theta)$ and $\tau_{g}(\theta)$ obtained earlier, condition \eqref{ODE} becomes
\begin{equation}\label{newODE}
\theta' + \cot(\varphi) \left( \kappa_{n} \cos(\theta) -  \kappa_{g}\sin(\theta) \right) +\tau_{g} = 0.
\end{equation}
This is a first-order, nonlinear ordinary differential equation in $\theta \colon I \to \mathbb{R}$, which admits a unique local solution for any initial condition $\theta(t_{0}) = q \in [0, 2\pi)$. 

It remains to check that the initial value problem is globally solvable, that is, its solution can be extended to the entire interval $I$.

Define $F \colon I \times \mathbb{R}$ by
\begin{equation*}
	F(t,x) = \cot(\varphi) \left(\kappa_{g} \sin(x) - \kappa_{n}\cos(x) \right) -\tau_{g}.
\end{equation*}
We are going to show that $F$ satisfies the following Lipschitz condition: there exists a constant $c >0$ such that, for every $t \in I$ and every $x,y \in \mathbb{R}$,
\begin{equation*}
	\left\lvert F(t,x)-F(t,y) \right\rvert \leq c \left\lvert x-y \right\rvert.
\end{equation*}
This way the statement will follow from the classical Picard--Lindel\"{o}f theorem; see e.g.\ \cite[Theorem~3.1]{o'regan1997}.

First of all, note that both $\kappa_{g}\cot(\varphi)$ and $\kappa_{n}\cot(\varphi)$ are bounded, because they are continuous on the closed interval $I$. Let $l$ and $m$ be upper bounds for $\lvert\kappa_{g}\cot(\varphi)\rvert$ and $\lvert\kappa_{n}\cot(\varphi)\rvert$, respectively. Computing
\begin{align*}
&\left\lvert F(t,x)-F(t,y) \right\rvert\\
&\qquad = \left\lvert \kappa_{g}(t) \cot(\varphi(t))  \left( \sin (x) - \sin (y)\right) + \kappa_{n}(t)\cot(\varphi(t))  \left( \cos(y) - \cos(x) \right)\right\rvert\\
&\qquad \leq \left\lvert \kappa_{g}(t) \cot(\varphi(t))\left( \sin (x) - \sin (y)\right) \right\rvert + \left\lvert \kappa_{n}(t)\cot(\varphi(t)) \left( \cos(y) - \cos(x) \right) \right\rvert\\
&\qquad = \left\lvert \kappa_{g}(t)\cot(\varphi(t)) \right\rvert \left\lvert  \sin (x) - \sin (y)\right\rvert + \left\lvert \kappa_{n}(t) \cot(\varphi(t)) \right\rvert \left\lvert  \cos(x) - \cos(y) \right\rvert,
\end{align*}
we observe that
\begin{equation*}
	\left\lvert F(t,x)-F(t,y) \right\rvert \leq  (l + m) \left\lvert x - y \right\rvert,
\end{equation*}
and so $F$ satisfies the Lipschitz condition with $c = l+m$, as desired.

\begin{remark}\label{ODEremark}
	It follows from section \ref{FlatRibbon} that $\mathcal{R}(N(\theta))$ has the same ruling angle as $\mathcal{R}(N)$ if and only if
	\begin{equation}\label{ODE2}
		\tau_{g}\kappa_{n}(\theta)=\tau_{g}(\theta)\kappa_{n}.
	\end{equation}
	By substituting \eqref{kappaNtheta} and \eqref{tauGtheta}, we observe that \eqref{ODE2} is equivalent to
	\begin{equation}\label{newODE2}
	\kappa_{n}\theta'+ \kappa_{g}\tau_{g} \sin (\theta) - \kappa_{n}\tau_{g}\cos(\theta)+\kappa_{n}\tau_{g} = 0.
	\end{equation}
Compared with \eqref{newODE}, equation \eqref{newODE2} offers a shortcut to the construction of the ribbon $\mathcal{R}(V)$ defined in Corollary~\ref{mainCOR}.
\end{remark}

\begin{remark}\label{kappaGremark}
	The geodesic curvature of $\gamma$ with respect to $N(\theta)$ is given by
	\begin{align*}
		\kappa_{g}(\theta)&= \left< T', H(\theta) \right> \\
		&= \left< T', \cos (\theta) H + \sin(\theta) N \right> \\
		&= \kappa_{g}\cos (\theta)  + \kappa_{n}\sin(\theta).
	\end{align*}

It is easy to see that $\kappa_{g}(\theta)=\kappa_{g}$ if and only if $N(\theta) = N(\pi -2 \bar{\theta})$, where $\bar{\theta}$ satisfies $\sin(\bar{\theta}) = \kappa_{g}/\kappa$ and $\cos(\bar{\theta}) = \kappa_{n}/\kappa$. It follows that, if $\kappa>0$, then there exist pairs of \emph{globally isometric} flat ribbons along $\gamma$; cf.\ \cite{fuchs1999}. This is in striking contrast to the case of positive Gaussian curvature, where a surface is globally rigid relative to any of its curves \cite{ghomi2020}.
\end{remark}

\section{Bending energy}\label{BendingEnergy}
Let $D$ be a flat surface in $\mathbb{R}^{3}$. The bending energy $\mathcal{E}(D)$ of $D$ is defined by
\begin{equation*}
	\mathcal{E}(D)= \int_{D} H^{2} \, dA,
\end{equation*}
where $H$ is the mean curvature and $dA$ the area element of $D$.

The purpose of this section is to prove Theorem~\ref{EnergyTH} and Corollary~\ref{EnergyCOR} in the introduction.

\begingroup
\def\thetheorem{\ref*{EnergyTH}}
\begin{theorem}
If $\mathcal{R}(N)$ has width $2w$, then its bending energy $\mathcal{E}(\mathcal{R}(N)) =\mathcal{E}(N)$ satisfies
\begin{equation*}
	\lim_{w \to 0} \mathcal{E}(N)= \lim_{w \to 0}\frac{w}{2}\int_{0}^{L} \kappa_{n}^{2}\left(1+\cot(\alpha(N))^{2}\right)^{2} \, dt.
\end{equation*} 
\end{theorem}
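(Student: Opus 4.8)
The plan is to compute the mean curvature $H$ and area element $dA$ directly from the explicit parametrization $\sigma(t,u) = \gamma(t) + u(H(t) + \mu(t)T(t))$ given in Theorem~\ref{existenceTH}, and then integrate $H^{2}$ over the strip $I \times [-w,w]$, extracting the leading-order behavior as $w \to 0$. Since the surface is flat, one principal curvature vanishes along each ruling, so $H = \tfrac{1}{2}\kappa_{1}$, where $\kappa_{1}$ is the single nonzero principal curvature; equivalently $H^{2}$ can be read off from the second fundamental form, whose only nonvanishing entry in a frame adapted to the ruling direction governs the bending. I would first normalize the ruling vector, writing $R = H + \mu T$ with $\lVert R \rVert^{2} = 1 + \mu^{2} = 1 + \cot(\alpha(N))^{2}$, so that factors of $(1+\cot(\alpha(N))^{2})$ appear naturally.

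The key computational steps, in order, are as follows. First I would compute $\sigma_{t} = T + u(H + \mu T)' $ and $\sigma_{u} = H + \mu T = R$, and form the unit normal $\nu = \sigma_{t} \times \sigma_{u} / \lVert \sigma_{t} \times \sigma_{u}\rVert$; along $\gamma$ itself (at $u=0$) one has $\sigma_{t} = T$ and $\nu = N$, which fixes the normalization. Second, I would compute the coefficients of the first fundamental form $E, F, G$ and of the second fundamental form $e, f, g$; flatness forces $eg - f^{2} = 0$, and because $\sigma_{uu} = 0$ the coefficient $g$ vanishes, leaving $H = e/(2E) \cdot (\text{correction from } F)$ more precisely $H = \tfrac{1}{2}\,(eG - 2fF + gE)/(EG - F^{2})$ with $g = 0$. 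Third, I would evaluate everything to leading order in $u$, since the dominant contribution as $w \to 0$ comes from a neighborhood of $\gamma$; here the Darboux structural equations of section~\ref{DarbouxFrame} let me express all derivatives of $T, H, N$ in terms of $\kappa_{g}, \kappa_{n}, \tau_{g}$, and the relation $\mu = -\tau_{g}/\kappa_{n}$ converts these into the stated quantities. The crucial identity to isolate is that $\lim_{u\to 0} H^{2}$, times the area element, contributes $\tfrac{1}{2}\kappa_{n}^{2}(1+\cot(\alpha(N))^{2})^{2}$ per unit length along $\gamma$, after the $u$-integration over $[-w,w]$ produces the prefactor of order $w$.

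The main obstacle I expect is the bookkeeping in the $u$-expansion: both the second fundamental form coefficient $e$ and the area element $\sqrt{EG - F^{2}}$ depend on $u$, and their quotient $H^{2}\,dA$ must be expanded carefully so that the leading term survives the limit while lower-order terms integrate to something vanishing faster than $w$. In particular, $\sqrt{EG-F^{2}}$ behaves like $\lvert u \rvert$ times a smooth factor near a striction-type configuration, and $H = e/(2\sqrt{EG-F^{2}})$ develops a $1/u$ singularity whose square, multiplied by the area element, yields a finite integrand; reconciling these singular factors to obtain a clean $w$-linear leading term is the delicate point. A convenient device is to switch to the orthonormal ruling direction and use that the nonzero principal curvature of a developable strip equals (normal curvature of the ruling-orthogonal direction) scaled by $\lVert R\rVert$; tracking this scaling is exactly what produces the power $(1+\cot(\alpha(N))^{2})^{2}$ rather than a lower power. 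Once the leading integrand is identified as $\tfrac{1}{2}\kappa_{n}^{2}(1+\cot(\alpha(N))^{2})^{2}$, the $w \to 0$ limit follows by dominated convergence, using that $\kappa_{n}$, $\cot(\alpha(N))$, and their continuous extensions are bounded on the compact interval $I$.
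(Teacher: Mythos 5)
Your route is essentially the paper's: work in the standard parametrization $\sigma(t,u)=\gamma(t)+u\left(\mu(t)T(t)+H(t)\right)$, compute the fundamental forms (with $f=g=0$), obtain the mean curvature as $Ge/\left(2(EG-F^{2})\right)$ and the area element $\sqrt{EG-F^{2}}$, integrate over $u\in[-w,w]$, and pass to the limit. The only genuine divergence is in the last step: the paper integrates exactly in $u$, arriving at the log formula \eqref{EnergyFiniteWidth}, and then shows that $\tfrac{1}{w\lambda}\log\tfrac{1+w\lambda}{1-w\lambda}\to 2$ uniformly on $I$, where $\lambda=\mu'-(1+\mu^{2})\kappa_{g}$; you propose instead to expand the integrand in $u$ and invoke dominated convergence. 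That variant is viable and arguably shorter, but the uniformity must be made explicit: since $\lambda$ is bounded on the compact interval $I$, writing $(1+u\lambda)^{-1}=1-u\lambda+(u\lambda)^{2}(1+u\lambda)^{-1}$ and integrating over the symmetric interval $[-w,w]$ cancels the linear term and bounds the error by a constant times $w^{3}$ uniformly in $t$, which is exactly what justifies exchanging the $w\to 0$ limit with the $t$-integral. You should also record, as the paper does at the outset, the reduction to the case $\kappa_{n}\neq 0$: the parametrization uses $\mu=-\tau_{g}/\kappa_{n}$, undefined at zeros of $\kappa_{n}$, whereas the statement is phrased via the continuous extension $\cot(\alpha(N))$, and the integrand vanishes where $\kappa_{n}=0$.

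One claim in your sketch is simply false and should be deleted: there is no singularity on the ribbon to ``reconcile.'' For the admissible widths of Theorem~\ref{existenceTH} the parametrization is regular on all of $I\times[-w,w]$; one finds $\sqrt{EG-F^{2}}=1+u\lambda$, which equals $1$ at $u=0$ and is bounded away from $0$ on the strip, and the mean curvature equals $-(1+\mu^{2})\kappa_{n}/\left(2(1+u\lambda)\right)$, smooth and bounded there. The locus where the area element degenerates (the edge of regression) lies at $u=-1/\lambda$, outside the ribbon for small $w$. So the picture ``$\sqrt{EG-F^{2}}\sim\lvert u\rvert$ and mean curvature $\sim 1/u$ near $u=0$'' is incorrect (and internally inconsistent: $u^{-2}\cdot\lvert u\rvert$ is not a finite integrand); the power $\left(1+\cot(\alpha(N))^{2}\right)^{2}$ does not come from cancelling singular factors, but simply from the fact that the single nonzero principal curvature along $\gamma$ is $-(1+\mu^{2})\kappa_{n}$, i.e., from the factor $G=1+\mu^{2}$ together with $e=\kappa_{n}(1+u\lambda)$ and $EG-F^{2}=(1+u\lambda)^{2}$ evaluated at $u=0$. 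With that paragraph removed and the uniform error bound spelled out, your argument is correct and matches the paper's computation, differing only in how the limit is taken.
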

\addtocounter{theorem}{-1}
\endgroup

\begin{proof}
Since the integrand is zero whenever $\kappa_{n}(t)=0$, we may assume that $\kappa_{n}$ is never zero. We need to prove that
\begin{equation*}
	\lim_{w \to 0} \mathcal{E}(N)= \lim_{w \to 0}\frac{w}{2}\int_{0}^{L} \kappa_{n}^{2}\left(1+\mu^{2}\right)^{2} \, dt,
\end{equation*} 
where $\mu = -\tau_{g}/\kappa_{n}$.

Our first goal is to compute the expressions of the mean curvature and the area element of $\mathcal{R}(N)$ in the standard parametrization $\sigma \colon [0,L] \times [-w,w] \to \mathbb{R}^{3}$,
\begin{equation*}
\sigma(t,u)=\gamma(t) + u X(t), \quad X(t) = \mu(t)T(t)+H(t).
\end{equation*}
This way, we will obtain a formula for the bending energy of a finite-width ribbon $\mathcal{R}(N)$ along $\gamma$.

As the reader may verify, the components of the first and second fundamental forms are
\begin{align*}
E &=  \left< \gamma' + u X', \gamma' + u X'\right> =\left(1 + u\left(\mu' - \kappa_{g}\right)\right)^{2} + \left(u\mu\kappa_{g}\right)^{2},\\
F &= \left< \gamma' + u X', X\right>= \mu\left( 1 + u \mu'\right),\\
G &= \left< X, X\right> =1+\mu^{2},
\end{align*}
and
\begin{align*}
e &= \left< \gamma'' +u X'', N \right> = \kappa_{n}\left( 1 + u \left(\mu'- \kappa_{g}-\kappa_{g}\mu^{2}\right)\right),\\
f &=  \left< X', N \right> =0,\\
g &= \left< 0, N \right> =0,
\end{align*}
respectively. A computation reveals that the area element is given by
\begin{equation*}
\sqrt{EG-F^{2}}= 1 +u\mu'- u\left(1+\mu^{2}\right)\kappa_{g},
\end{equation*}
whereas for the mean curvature one obtains
\begin{equation*}
H= \frac{G e}{2\left(EG-F^{2}\right)} =- \frac{\left(1+\mu^{2}\right)\kappa_{n}}{2\left(1 +u\mu'-u \left(1+\mu^{2}\right)\kappa_{g}\right)}.
\end{equation*}

The bending energy may therefore be computed by
\begin{align*}
\mathcal{E}(N) &= \int_{0}^{L} \int_{-w}^{w} H^{2} \sqrt{EG-F^{2}} \, du\, dt\\
&=\frac{1}{4} \int_{0}^{L} \int_{-w}^{w}  \frac{\left(1+\mu^{2}\right)^{2}\kappa_{n}^{2}}{1 +u\left(\mu'- \left(1+\mu^{2}\right)\kappa_{g}\right)}\, du \, dt.
\end{align*}
In particular, in the closed subset where $\mu'= (1+\mu^{2})\kappa_{g}$, the integrand does not depend on $u$, and so the inner integral reduces to
\begin{equation}\label{IntegralSpecialCase}
2w\kappa_{n}^{2}\left(1+\mu^{2}\right)^{2} .
\end{equation}
Hence, we may assume that $\mu'(t)\neq (1+\mu(t)^{2})\kappa_{g}(t)$ for every $t \in I$. In that case, integration with respect to $u$ gives
\begin{equation}\label{EnergyFiniteWidth}
\mathcal{E}(N) = \frac{1}{4} \int_{0}^{L} \frac{\left(1+\mu^{2}\right)^{2}\kappa_{n}^{2}}{\mu'-\left(1+\mu^{2}\right) \kappa_{g}} \log \left(\frac{1 +w\left(\mu'- \left(1+\mu^{2}\right)\kappa_{g}\right)}{1 -w\left(\mu'- \left(1+\mu^{2}\right)\kappa_{g}\right)} \right)\, dt.
\end{equation}

Our task is now to evaluate the limit of $\mathcal{E}(N)$ as $w$ approaches zero. We first rewrite \eqref{EnergyFiniteWidth} by means of the following notations:
\begin{align}
	\lambda &= \mu'- (1+\mu^{2})\kappa_{g}, \nonumber \\
	\eta_{1} &= \frac{1}{w \lambda} \log \left(\frac{1 +w\lambda}{1 -w\lambda} \right),\nonumber \\
	\eta_{2}&= w \left(1+\mu^{2}\right)^{2}\kappa_{n}^{2},\nonumber \\
	\mathcal{E}(N) &=  \frac{1}{4} \int_{0}^{L} \eta_{1} \eta_{2}\, dt. \label{EnergyFiniteWidthFG}
	\end{align}
Since $\eta_{1}$ converges pointwise to $2$ as $w \to 0$, it is clear that the integrand $\eta_{1}\eta_{2}$ converges pointwise to $0$ as $w \to 0$. In fact, we will show that the convergence is uniform. Therefore, it will follow from standard analysis \cite[p.~251]{bartle2011} that
\begin{align*}
\lim_{w\to 0} \mathcal{E}(N)&=\frac{1}{4} \int_{0}^{L} \lim_{w\to 0}  \eta_{1}\eta_{2} \, dt\\
&= \frac{1}{4} \int_{0}^{L} \lim_{w\to 0}  2 w\left(1+\mu^{2}\right)^{2}\kappa_{n}^{2}\, dt\\
&=\lim_{w \to 0}\frac{w}{2}\int_{0}^{L} \left(1+\mu^{2}\right)^{2} \kappa_{n}^{2} \, dt.
\end{align*}

It is evident that $\eta_{2}$ is uniformly convergent to $0$ as $w \to 0$. Thus, being $\eta_{1}$ and $\eta_{2}$ bounded, proving that $\eta_{1}$ converges uniformly to $2$ is sufficient to establish uniform convergence of $\eta_{1}\eta_{2}$; see \cite[p.~247]{bartle2011}.

Let 
\begin{equation}\label{hDef}
\xi = \frac{1}{w \lambda} \log \left(\frac{1 +w\lambda}{1 -w\lambda} \right) - 2.
\end{equation}
We need to check that $\max_{t\in I}\lvert \xi(t) \rvert \to 0$ as $w \to 0$. To this end, we calculate $\xi'$ and set it equal to $0$. This leads to
\begin{equation*}
-2 w \lambda \lambda'+\log \left(\frac{1 +w\lambda}{1 -w\lambda} \right) \left( \lambda'-w^{2}\lambda^{2}\lambda' \right)=0.
\end{equation*}

Note that the term multiplying the logarithm only vanishes when $\lambda'$ does. Let $J$ be the zero set of $\lambda'$. Then, since $J$ is independent of $w$, it follows that
\begin{equation*}
\lim_{w\to 0} \max_{ J} \left\lvert \xi\right\rvert =0.
\end{equation*}

On the other hand, in the subinterval $(0,L) \setminus J$, we have $\xi'=0$ if and only if
\begin{equation*}
\log \left(\frac{1 +w\lambda}{1 -w\lambda} \right)= \frac{2 w  \lambda \lambda'}{ \lambda' - w^{2}\lambda^{2}\lambda'}.
\end{equation*}
Together with \eqref{hDef}, this implies
\begin{equation*}
\max_{ (0,L)\setminus J}	\left\lvert \xi\right\rvert =\max_{ (0,L)\setminus J} \left\lvert \frac{2 \lambda'}{\lambda' -w^{2}\lambda^{2}\lambda'}- 2\right\rvert,
\end{equation*}
from which we observe that
\begin{equation*}
\lim_{w\to 0} \max_{ (0,L)\setminus J}	\left\lvert \xi\right\rvert =0.
\end{equation*}

Hence,
\begin{equation*}
\lim_{w\to 0} \eta_{1} = 2 \text{ uniformly},
\end{equation*}
which is the desired conclusion.
\end{proof}

\begin{remark}
If $\gamma$ is locally nonplanar, then, as a function on the set of infinitely narrow flat ribbons along $\gamma$, the bending energy is unbounded. This is because
\begin{equation*}
\lim_{\kappa_{n}(t) \to 0} \kappa_{n}^{2}(t)\left(1+\mu(t)^{2} \right)^{2} = \infty
\end{equation*}
when $\tau_{g}(t) \neq 0$, and one can construct flat ribbons along $\gamma$ of arbitrarily small normal curvature. Indeed, on a subinterval where $\kappa>0$, for $x \in \mathbb{R}$, let $N(x)$ be defined by
\begin{equation*}
\left\lVert T' \right\rVert	N(x)=\cos(x)T' -\sin(x) T'\times T.
	\end{equation*}
It follows that $\tau_{g}(x) = \tau$ and $\kappa_{n}(x) \to 0$ as $x \to \pi/2$.

On the other hand, under the same assumption of local nonplanarity, the bending energy has a \emph{positive} lower bound. Thus, one may search for the ruling angle and initial condition in $\gamma'(0)^{\perp}$ that give the least bending energy. This is a very interesting problem, which the author hopes will be the subject of future study.
\end{remark}

\begingroup
\def\thetheorem{\ref*{EnergyCOR}}
\begin{corollary}
If $\mathcal{R}(N)$ and $\mathcal{R}(V)$ are flat ribbons along $\gamma$ with the same ruling angle, then, in the limit of infinitely small widths, their bending energies $\mathcal{E}(N)$ and $\mathcal{E}(V)$ satisfy
\begin{equation*}
	\mathcal{E}(V) \leq \mathcal{E}(N) + \frac{w}{2}\int_{0}^{L} \kappa_{g}^{2}\left(1 + \cot(\alpha(N))^{2}\right)^{2} \, dt,
\end{equation*}
where $\kappa_{g}$ is the geodesic curvature of $\gamma$ with respect to $N$.
In particular, if the normal curvature $\kappa_{n}$ of $\gamma$ with respect to $N$ is always nonzero, then 
\begin{equation*}
	\frac{\mathcal{E}(V)}{\mathcal{E}(N)} \leq 1 + \max \rho^{2} = 1 + \max \tan (\phi)^{2},
\end{equation*}
where $\rho = \kappa_{g}/\kappa_{n}$ and  $\phi$ is the angle between $N$ and $\gamma''$.
\end{corollary}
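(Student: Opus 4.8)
The plan is to reduce everything to Theorem~\ref{EnergyTH} together with the rotation formulas established in the proof of Theorem~\ref{mainTH}. Since $V$ is a unit normal vector field along $\gamma$, it is orthogonal to $T$ and hence lies in $\Span\{H,N\}$, so I may write $V = N(\theta)$ for a smooth function $\theta \colon I \to \mathbb{R}$, where $N(\theta) = -\sin(\theta)H + \cos(\theta)N$. Because $\mathcal{R}(N)$ and $\mathcal{R}(V)$ share the same ruling angle, the factor $(1+\cot(\alpha(N))^2)^2$ appearing in the energy formula \eqref{EnergyFormula} is common to both ribbons. Applying Theorem~\ref{EnergyTH} to each, the only difference between $\mathcal{E}(V)$ and $\mathcal{E}(N)$ in the limit $w\to 0$ comes from replacing the normal curvature $\kappa_{n}$ of $\gamma$ with respect to $N$ by the normal curvature $\kappa_{n}(\theta)$ with respect to $V$.

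The key estimate is a pointwise bound on $\kappa_{n}(\theta)^2$. From \eqref{kappaNtheta} I have $\kappa_{n}(\theta) = \kappa_{n}\cos(\theta) - \kappa_{g}\sin(\theta)$, while from Remark~\ref{kappaGremark} the geodesic curvature with respect to $V$ is $\kappa_{g}(\theta) = \kappa_{g}\cos(\theta) + \kappa_{n}\sin(\theta)$. Squaring and adding, the cross terms cancel and I obtain the rotation invariance
\begin{equation*}
\kappa_{n}(\theta)^2 + \kappa_{g}(\theta)^2 = \kappa_{n}^2 + \kappa_{g}^2 = \kappa^2,
\end{equation*}
whence $\kappa_{n}(\theta)^2 \leq \kappa_{n}^2 + \kappa_{g}^2$. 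Substituting this into the energy integral supplied by Theorem~\ref{EnergyTH} and splitting the right-hand side into its $\kappa_{n}^2$ and $\kappa_{g}^2$ parts gives
\begin{equation*}
\mathcal{E}(V) \leq \frac{w}{2}\int_{0}^{L} \left(\kappa_{n}^2 + \kappa_{g}^2\right)\left(1+\cot(\alpha(N))^2\right)^2 dt = \mathcal{E}(N) + \frac{w}{2}\int_{0}^{L} \kappa_{g}^2\left(1+\cot(\alpha(N))^2\right)^2 dt,
\end{equation*}
which is the first claimed inequality.

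For the second part I assume $\kappa_{n}\neq 0$ throughout $I$ and factor $\kappa_{n}^2 + \kappa_{g}^2 = \kappa_{n}^2(1+\rho^2)$ with $\rho = \kappa_{g}/\kappa_{n}$. Bounding $1+\rho^2 \leq 1 + \max\rho^2$ and pulling the constant outside the integral yields
\begin{equation*}
\mathcal{E}(V) \leq \left(1 + \max\rho^2\right)\frac{w}{2}\int_{0}^{L}\kappa_{n}^2\left(1+\cot(\alpha(N))^2\right)^2 dt = \left(1+\max\rho^2\right)\mathcal{E}(N),
\end{equation*}
so $\mathcal{E}(V)/\mathcal{E}(N) \leq 1 + \max\rho^2$. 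Finally, since $\gamma'' = T' = \kappa_{g}H + \kappa_{n}N$, the angle $\phi$ between $N$ and $\gamma''$ satisfies $\cos(\phi) = \kappa_{n}/\kappa$, hence $\tan(\phi)^2 = \kappa_{g}^2/\kappa_{n}^2 = \rho^2$ and $\max\rho^2 = \max\tan(\phi)^2$.

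The computation is short; the only points requiring care are the representation $V = N(\theta)$ with smooth $\theta$ and the interpretation of the inequality ``in the limit of infinitely small widths.'' I expect this last point to be the main (minor) obstacle: since the right-hand side of \eqref{EnergyFormula} carries a factor of $w$, both $\mathcal{E}(N)$ and $\mathcal{E}(V)$ tend to $0$, so the inequality must be read at the level of the leading-order coefficients, that is, as a comparison of the integrals multiplying $w/2$. Once this convention is fixed, the argument reduces to the single pointwise estimate $\kappa_{n}(\theta)^2 \leq \kappa_{n}^2 + \kappa_{g}^2$ applied under the common integral.
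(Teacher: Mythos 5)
Your argument is correct and, for the first inequality, is essentially the paper's own proof: write $V=N(\theta)$, use the common ruling angle to keep the factor $\left(1+\cot(\alpha(N))^{2}\right)^{2}$, and apply the pointwise bound $\kappa_{n}(\theta)^{2}\leq\kappa_{n}^{2}+\kappa_{g}^{2}=\kappa^{2}$, which follows from \eqref{kappaNtheta} and the rotation formula for $\kappa_{g}(\theta)$. The only divergence is in the second part: the paper keeps the factor $\rho^{2}$ inside the integral and invokes the first mean value theorem for integrals to extract $\rho(s)^{2}$ at some $s\in I$ before bounding by $\max\rho^{2}$, whereas you bound $1+\rho^{2}\leq 1+\max\rho^{2}$ pointwise and pull the constant out directly; both are valid, and your route is if anything more elementary (it avoids the mean value theorem, needing only that $\rho^{2}$ is continuous on the compact interval $I$ so that the maximum exists). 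You also verify explicitly that $\tan(\phi)^{2}=\rho^{2}$ via $\gamma''=\kappa_{g}H+\kappa_{n}N$ and $\cos(\phi)=\kappa_{n}/\kappa$, a step the paper states without proof, and your reading of the inequality at the level of the leading-order coefficients in $w$ matches the paper's convention ``assuming that $w$ is infinitely small.''
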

\addtocounter{theorem}{-1}
\endgroup

\begin{proof}
Let $\mathcal{R}(N)$ and $\mathcal{R}(V=N(\theta))$ be flat ribbons along $\gamma$ with the same ruling angle. By Theorem~\ref{EnergyTH}, the bending energy of $\mathcal{R}(V)$ satisfies
\begin{align}
	\lim_{w \to 0} \mathcal{E}(V)&= \lim_{w \to 0}\frac{w}{2}\int_{0}^{L} \kappa_{n}(\theta)^{2}\left(1+\cot(\alpha(N(\theta)))^{2}\right)^{2} \, dt \nonumber\\
	&= \lim_{w \to 0}\frac{w}{2}\int_{0}^{L} \kappa_{n}(\theta)^{2}\left(1+\cot(\alpha(N))^{2}\right)^{2} \, dt. \label{EnergySmallWidth}
\end{align} 
Since $\kappa_{n}(\theta)^{2} \leq \kappa^{2} = \kappa_{n}^{2} + \kappa_{g}^{2}$, assuming that $w$ is infinitely small, we obtain
\begin{align*}
\mathcal{E}(V) &\leq \frac{w}{2}\int_{0}^{L}\kappa_{n}^{2}\left(1+\cot(\alpha(N))^{2}\right)^{2} \, dt + \frac{w}{2}\int_{0}^{L}\kappa_{g}^{2}\left(1+\cot(\alpha(N))^{2}\right)^{2} \, dt\\
&= \mathcal{E}(N) + \frac{w}{2}\int_{0}^{L}\kappa_{g}^{2}\left(1+\cot(\alpha(N))^{2}\right)^{2} \, dt. 
\end{align*}


Now, suppose that $\kappa_{n}(t) \neq 0$ for all $t \in I$, and let $\rho = \kappa_{g}/\kappa_{n}$. Then
\begin{equation*}
	\mathcal{E}(V) \leq \mathcal{E}(N) + \frac{w}{2}\int_{0}^{L} \rho^{2}\kappa_{n}^{2} \left(1+\cot(\alpha(N))^{2}\right)^{2} \, dt.
\end{equation*}
Noting that the integrand in the equation above is a product of nonnegative functions, by invoking the first mean value theorem for integrals \cite[p.~301]{bartle2011}, we deduce that there exists $s \in I$ such that
\begin{equation*}
\int_{0}^{L} \rho^{2}\kappa_{n}^{2} \left(1+\cot(\alpha(N))^{2}\right)^{2} \, dt = \rho(s)^{2} \int_{0}^{L}\kappa_{n}^{2} \left(1+\cot(\alpha(N))^{2}\right)^{2} \, dt.
\end{equation*}
Hence
\begin{equation*}
\mathcal{E}(V) \leq \mathcal{E}(N) + \rho(s)^{2} \mathcal{E}(N),
\end{equation*}
and the assertion of the corollary follows.
\end{proof}

\section{Special cases}\label{SpecialCases}

In this section we study the set of flat ribbons along a locally nonplanar curve $\gamma$ under two natural choices of ruling angle $\alpha$:
\begin{enumerate}[label=(\Alph*)]
	\item \label{item1} $\alpha$ is constant and equal to $\pi/2$.
	\item \label{item2} Assuming $\kappa>0$, $\alpha$ coincides with the ruling angle of the rectifying developable of $\gamma$.
\end{enumerate}


\subsection*{Case \ref{item1}}

Note from \eqref{RulingAngleFormula} that $\alpha(N)=\pi/2$ if and only if $\tau_{g}=0$. Hence, between any two consecutive zeros of $\kappa_{n}$, equation \eqref{newODE2} reduces to $\theta'=0$. Assuming $\kappa_{n}(t) \neq 0$ everywhere, it follows that the initial value problem (defined by $\theta(0)=q$) has constant solution $\theta=q$.

Applying \eqref{EnergySmallWidth} and \eqref{kappaNtheta}, we then observe that the bending energy of $\mathcal{R}(N(q))$, under the hypothesis of infinitely small width, is given by
\begin{equation}\label{EnergySmallWidthCase1}
	\mathcal{E}(N(q)) =\frac{w}{2}\int_{0}^{L} \left(\kappa_{n}\cos(q) - \kappa_{g}\sin(q) \right)^{2} \, dt.
\end{equation} 

In order to analyze the dependence of the bending energy on the initial condition, we calculate $d\mathcal{E}(N(q))/d q$ and set it equal to zero. This leads to
\begin{equation}
A \sin(q)\cos(q) +B\left(\sin(q)^{2}-\cos(q)^{2} \right)=0,
\end{equation} 
where
\begin{align*}
A &= \int_{0}^{L}\left( \kappa_{g}^{2}- \kappa_{n}^{2}\right) \, dt,\\
B &= \int_{0}^{L} \kappa_{g} \kappa_{n} \, dt.
\end{align*}

First, suppose that $A=B=0$. Then the energy is independent of $q$, and
\begin{equation*}
	\mathcal{E}(N(q)) = \frac{w}{4} \int_{0}^{L} \kappa^{2} \, dt.
\end{equation*}
Else, if $B=0$ and $A \neq 0$, then $d\mathcal{E}(N(q))/d q=0$ if and only if $\sin(q)=0$ or $\cos(q)=0$, implying that the extreme values of $\mathcal{E}(N(q))$ are
\begin{equation*}
\frac{w}{2}\int_{0}^{L} \kappa_{n}^{2} \, dt = \mathcal{E}(N)\quad \text{and} \quad \frac{w}{2}\int_{0}^{L} \kappa_{g}^{2} \, dt.
\end{equation*} 

Next, suppose that $B \neq 0$. Noting that $d\mathcal{E}(N(q))/dq\neq0$ if $B\neq0$ and $\sin(q)=0$,  we may assume that $B\neq0$ and $\sin(q)\neq0$. Consequently, $d\mathcal{E}(N(q))/d q$ vanishes if and only if 
\begin{equation*}
B\cot(q)^{2}-A\cot(q) -B=0,
\end{equation*} 
which have solutions
\begin{equation}\label{qSolutions}
q = \ArcCot \frac{A \pm \sqrt{A^{2}+4B^{2}}}{2B}.
\end{equation}
Substitution of \eqref{qSolutions} into \eqref{EnergySmallWidthCase1}, alongside an easy, if tedious, calculation, demonstrates that
\begin{align*}
\max _{q \in [0,2\pi)} \mathcal{E}(N(q)) &=\frac{w}{4} \int_{0}^{L}\kappa^{2} \, dt + \frac{w}{4}\sqrt{A^{2}+4B^{2}},\\
\min _{q \in [0,2\pi)} \mathcal{E}(N(q)) &= \frac{w}{4} \int_{0}^{L} \kappa^{2} \, dt - \frac{w}{4}\sqrt{A^{2}+4B^{2}}.
\end{align*}

\subsection*{Case \ref{item2}}
It is clear that, if $N= T'/\lVert T'\rVert$, then $\kappa_{g}=0$, $\kappa_{n}=\kappa$, and $\tau_{g}=\tau$. Hence in this case equation \eqref{newODE2} simplifies to the separable equation
\begin{equation*}
\theta'   =\tau \left(\cos(\theta)-1 \right).
\end{equation*}
Letting $\psi(t)= \int_{0}^{t} \tau(z) \, dz$, the solution is
\begin{align}\label{CaseBSolution}
		\theta_{q}=
	\begin{cases}
0 \quad &\text{if } q=0,\\
2 \ArcCot \left(\cot(q/2) + \psi\right) \quad &\text{if } q\in (0,2\pi).
	\end{cases}
\end{align}

As for the bending energy, equation \eqref{EnergySmallWidth} now yields
\begin{equation*}
	\mathcal{E}(N(\theta_{q})) =\frac{w}{2}\int_{0}^{L} \left(\kappa\cos(\theta_{q}) \right)^{2}\left(1+\mu^{2}\right)^{2} \, dt.
\end{equation*}
Substituting \eqref{CaseBSolution}, we obtain
\begin{align}\label{CaseBEnergy}
\mathcal{E}(N(\theta_{q})) 	=
	\begin{dcases}
 \mathcal{E}(N(0))= \mathcal{E}(N)  \quad &\text{if }q=0,\\
\frac{w}{2}\int_{0}^{L} \frac{\left(1-\delta_{q}^{2}\right)^{2}}{\left(1+\delta_{q}^{2}\right)^{2}}\kappa^{2}\left(1+\mu^{2}\right)^{2} \, dt \quad &\text{if } q\in (0,2\pi),
	\end{dcases}
	\end{align}
where $\delta_{q}=\cot(q/2)+ \psi$.

\section{The helix}\label{Helix}
We conclude the paper by applying the results of the previous section to a specific curve, namely a circular helix of radius $a$ and pitch $2\pi b$:
\begin{equation*}
	\gamma(t)= \left( a\cos \frac{t}{\sqrt{a^{2}+b^{2}}}, a\sin\frac{t}{\sqrt{a^{2}+b^{2}}}, \frac{bt}{\sqrt{a^{2}+b^{2}}}\right).
\end{equation*}
The curvature and torsion are $a/(a^{2}+b^{2})$ and $b/(a^{2}+b^{2})$, respectively; they coincide with the normal curvature and geodesic torsion of $\gamma$ with respect to $\gamma''/\lVert \gamma''\rVert = N$.

We first examine the case $\alpha=\pi/2$. Setting $\varphi=\pi/2$ and $\tau_{g} = b/(a^{2}+b^{2})$, equation \eqref{newODE} becomes 
\begin{equation*}
\theta'  = - \frac{b}{a^{2}+b^{2}},
	\end{equation*}
and so
\begin{equation*}
	\theta(t) = - \frac{bt}{a^{2}+b^{2}}
\end{equation*}
is a solution, i.e., $\alpha(N(\theta)) = \pi/2$. It follows that the normal and geodesic curvatures with respect to $N(\theta)$ are
\begin{align*}
	\kappa_{n}(\theta(t)) &= \kappa\cos (\theta(t)) = \frac{a}{a^{2}+b^{2}} \cos\frac{bt}{a^{2}+b^{2}},\\
	\kappa_{g}(\theta(t)) &= \kappa\sin (\theta(t)) = -\frac{a}{a^{2}+b^{2}} \sin\frac{bt}{a^{2}+b^{2}},
	\end{align*}
whereas $\tau_{g}(\theta)=0$, as desired. Applying formula \eqref{EnergySmallWidthCase1} to $N(\theta)$, we obtain 
\begin{equation*}
	\mathcal{E}(N(\theta+q)) = a^{2} w \frac{2bL + \left(a^{2}+b^{2}\right)\left(\sin \left(\frac{2bL}{a^{2}+b^{2}}-2q\right)+\sin(2q)\right)}{8 b \left( a^{2}+b^{2} \right)^{2}}.
\end{equation*} 
Letting $r =bL/(a^{2}+b^{2})$ and normalizing by $\mathcal{E}(N(\theta))$, we finally get  
\begin{equation}\label{ratioA}
\frac{\mathcal{E}(N(\theta+q))}{\mathcal{E}(N(\theta))} = \frac{2r + \sin(2q) - \sin(2(q-r))}{2r +\sin(2r)}.
\end{equation} 

Figure \ref{figureA} displays the graph of the function $q\mapsto \mathcal{E}(N(\theta+q))/\mathcal{E}(N(\theta))$ for different values of the parameter $r$.

\begin{figure}[h]
	\centering
	
\subfloat[$r=1$]{\includegraphics[width=0.4\textwidth]{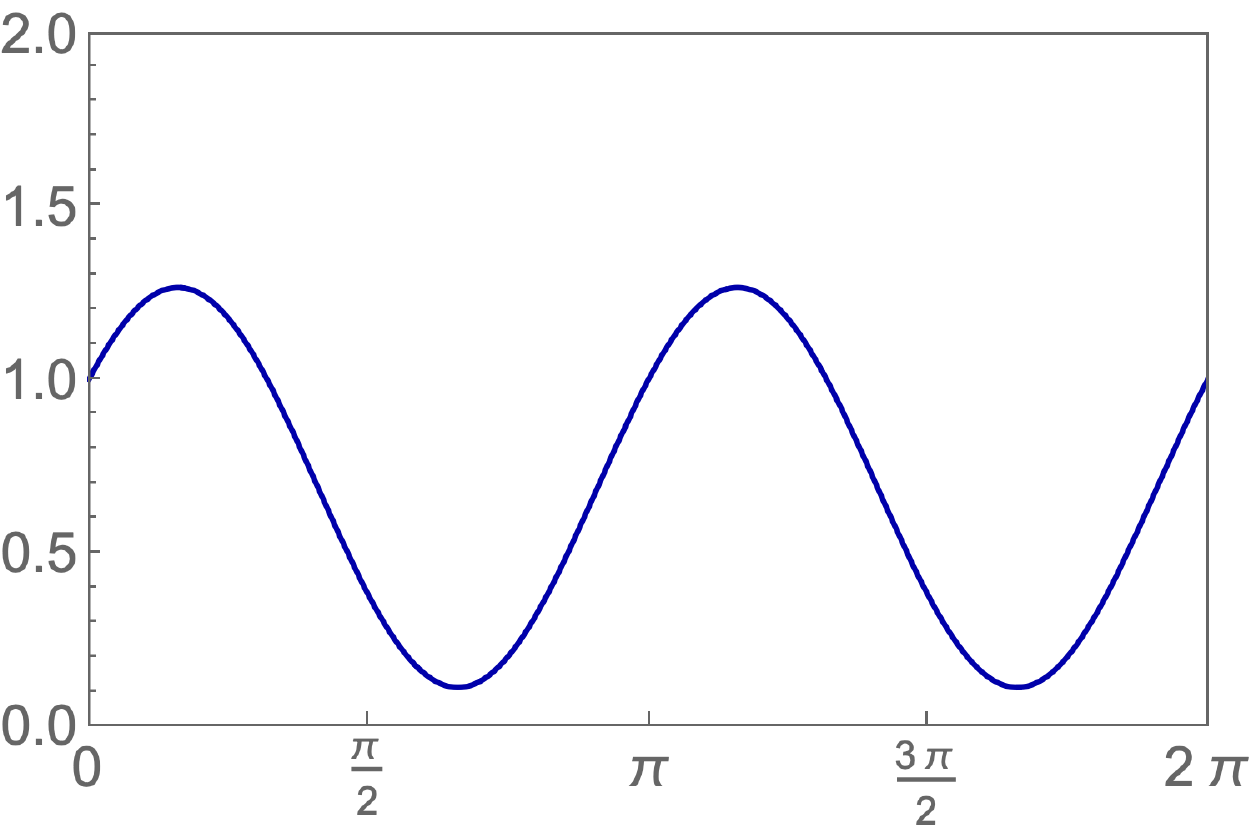}\label{sfA1}} \qquad
\subfloat[$r=2$]{\includegraphics[width=0.4\textwidth]{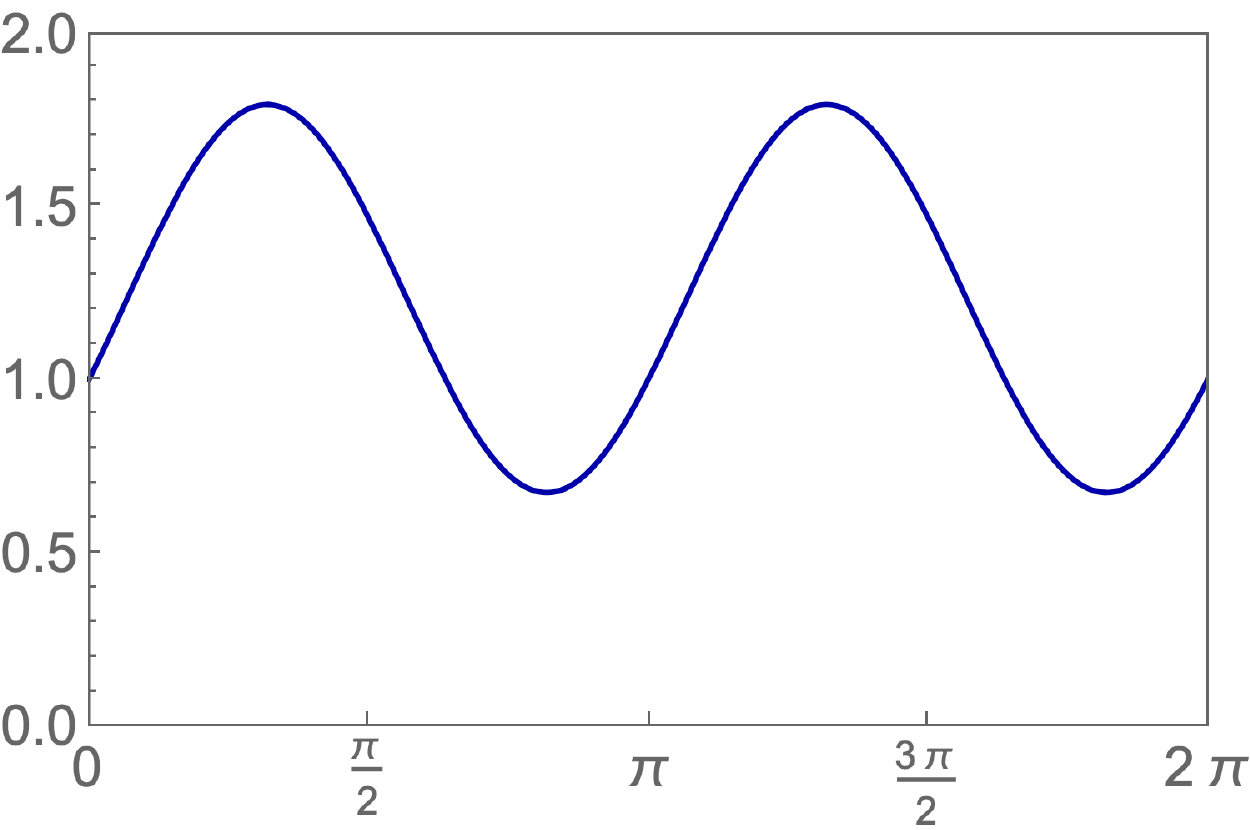}\label{sfA2}}

\subfloat[$r=3$]{\includegraphics[width=0.4\textwidth]{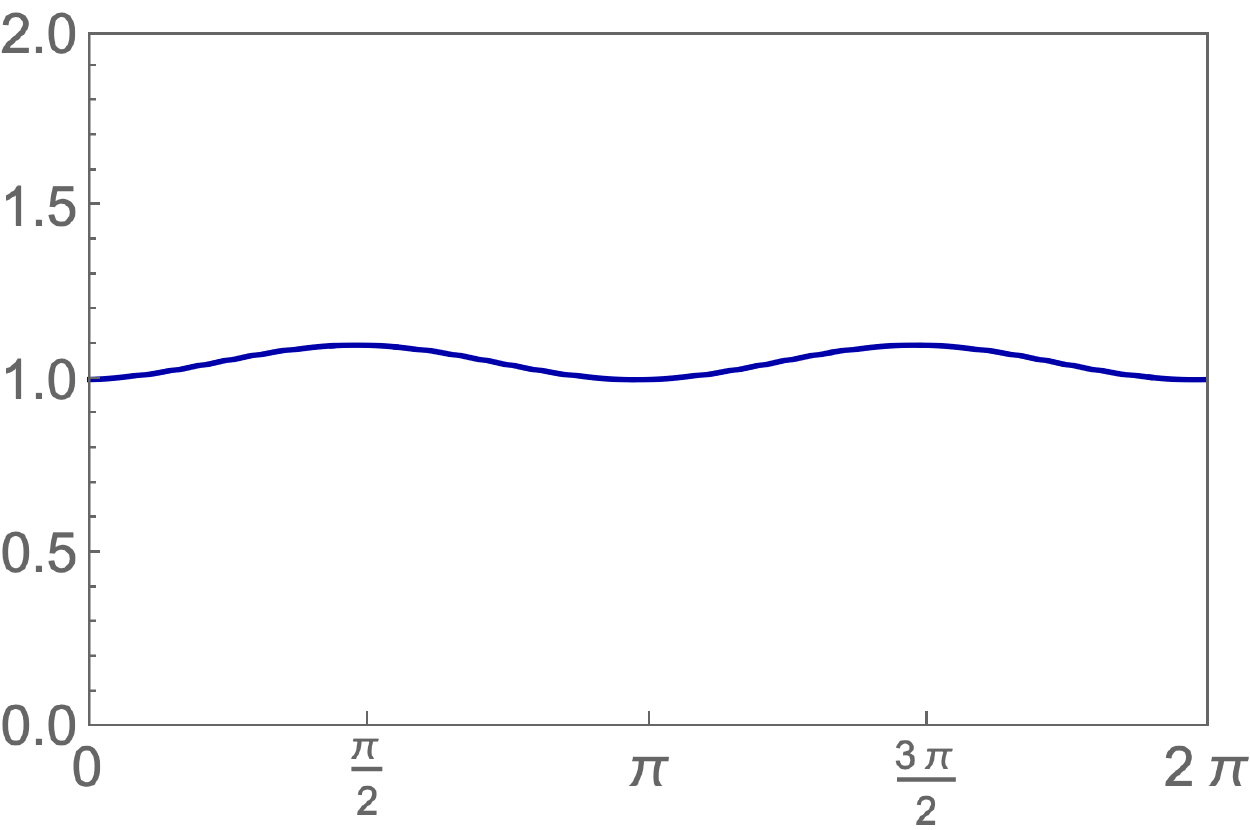}\label{sfA3}}\qquad
\subfloat[$r=4$]{\includegraphics[width=0.4\textwidth]{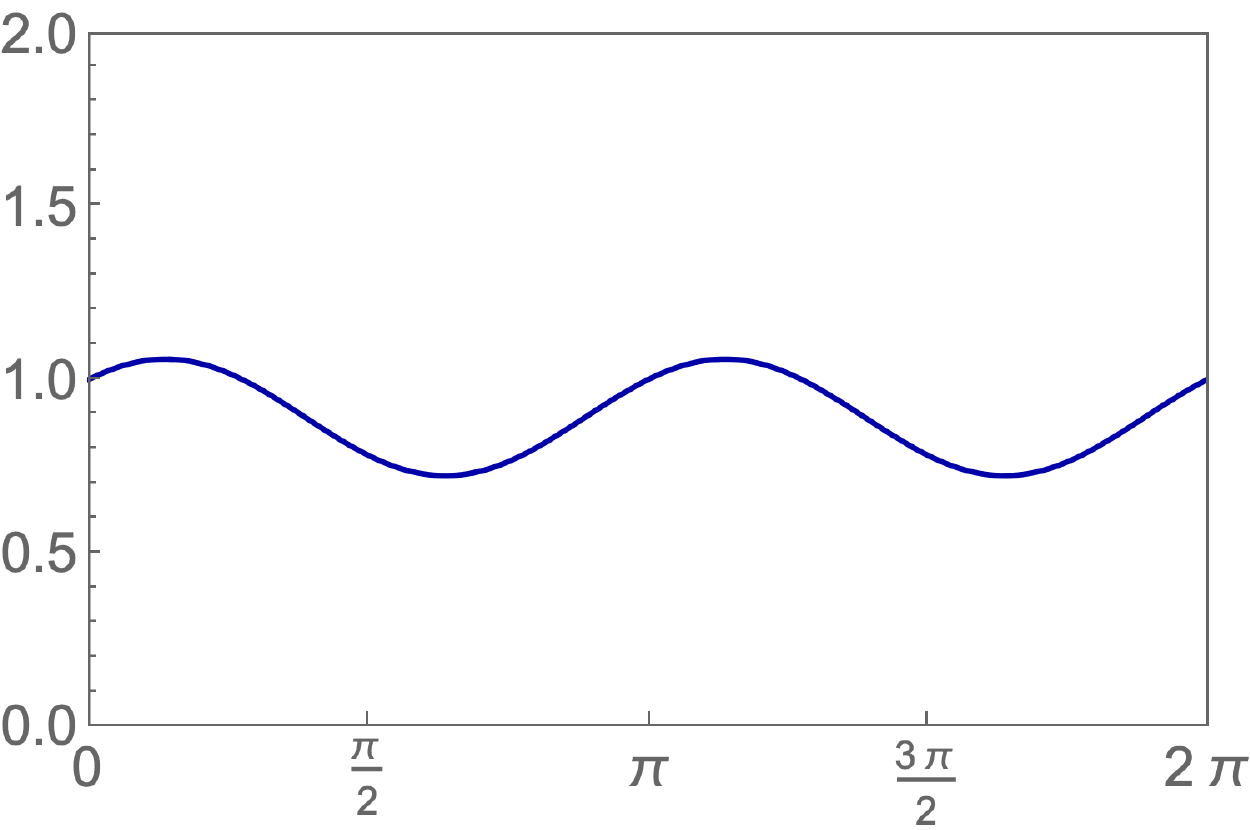}\label{sfA4}}
\caption[Plots of the normalized bending energy \eqref{ratioA}  as a function of $q$ for several values of $r$]{Plots of the normalized bending energy \eqref{ratioA} as a function of $q$ for several values of $r$.}\label{figureA}
\end{figure}

We then turn our attention to the case in which $\alpha$ coincides with the ruling angle of the rectifying developable. The bending energy is now given by \eqref{CaseBEnergy}. For $q=0$ it reads 
\begin{equation*}
 \mathcal{E}(N)  =\frac{wL}{2 a^{2}}.
\end{equation*}
On the other hand, if $q\neq 0$, then $\delta_{q}=\cot(q/2)+\psi$, where $\psi(t)= b t/(a^{2}+b^{2})$. A computation reveals that
\begin{equation*}
\frac{b}{a^{2}+b^{2}}\int \frac{\left(1-\delta_{q}^{2}\right)^{2}}{\left(1+\delta_{q}^{2}\right)^{2}}\, dt= -2\arctan \delta_{q} + \delta_{q} \frac{3+\delta_{q}^{2}}{1+\delta_{q}^{2}},
	\end{equation*}
from which one easily obtains $\mathcal{E}(N(\theta_{q}))$. In particular, it follows that
\begin{equation}\label{ratioB}
\frac{\mathcal{E}(N(\theta_{q}))}{\mathcal{E}(N)} =
\begin{multlined}[t]
\frac{1}{r} \Big[ 2\arctan(\cot(q/2)) -2 \arctan(\cot(q/2)+r)\\
{}+\left( \cot(q/2)+r\right) \frac{ 3 + \left( \cot(q/2)+r\right) ^{2}}{1 + \left( \cot(q/2)+r\right) ^{2}} \\
{}+\left(\cos(q) -2\right)\cot(q/2) \Big].
\end{multlined}
\end{equation}

As in the previous case, the graph of function $q\mapsto \mathcal{E}(N(\theta_{q})/\mathcal{E}(N)$ is plotted for different choices of $r$ in Figure \ref{figureB}.

\begin{figure}[h]
	\centering
	
	\subfloat[$r=1$]{\includegraphics[width=0.4\textwidth]{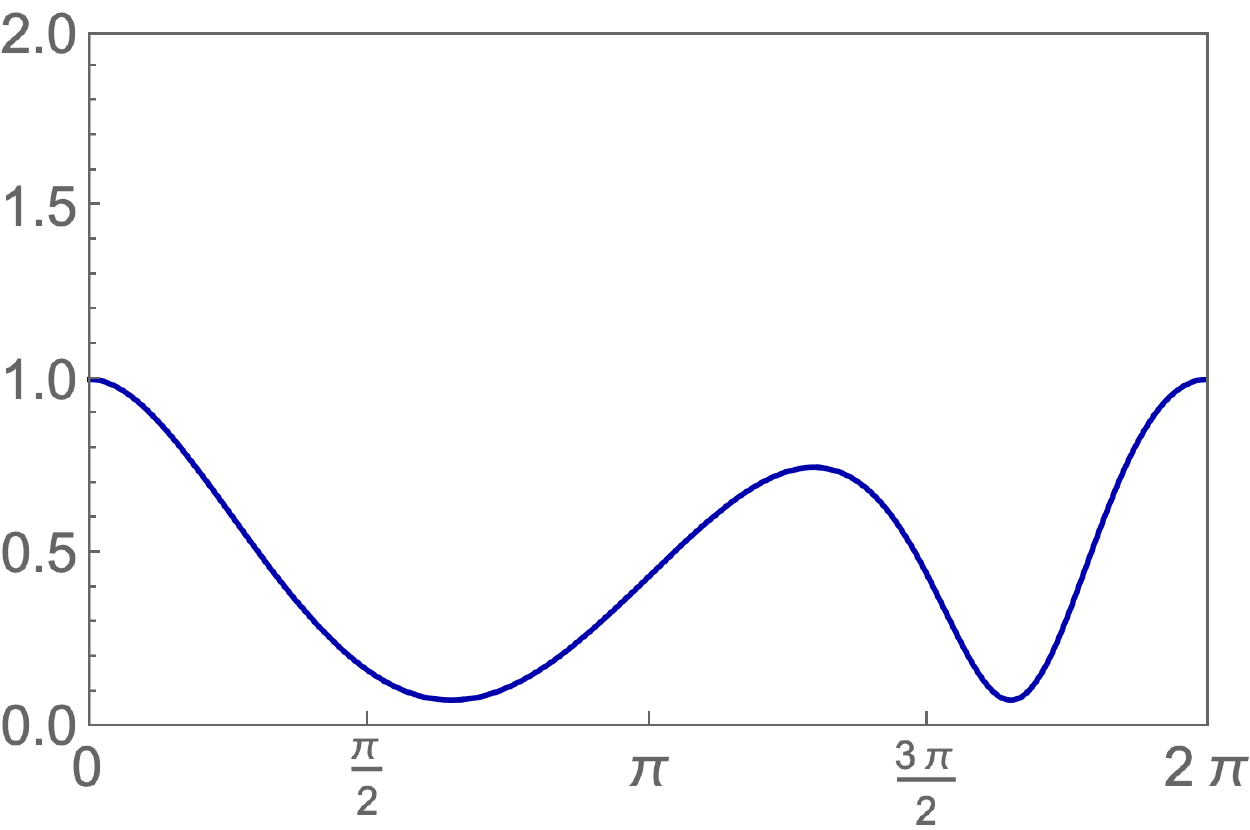}\label{sfB1}} \qquad
	\subfloat[$r=2$]{\includegraphics[width=0.4\textwidth]{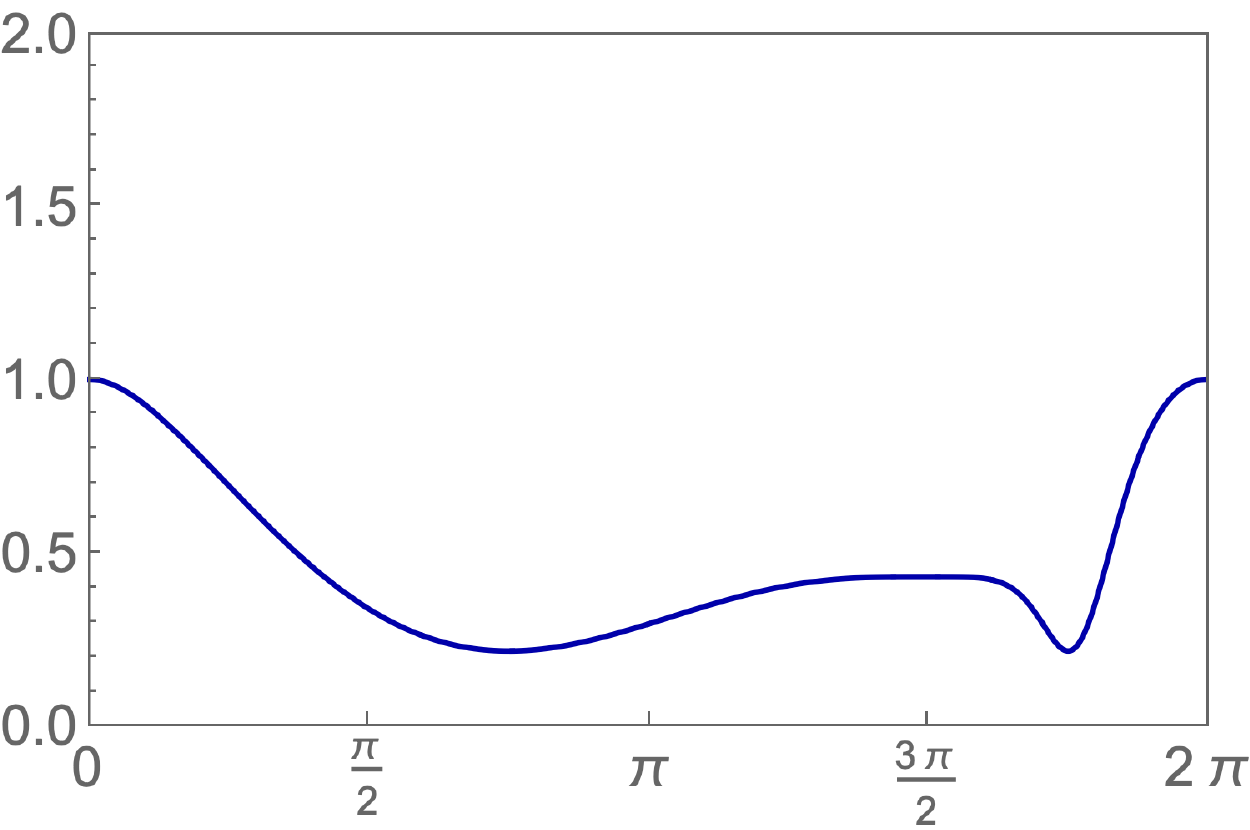}\label{sfB2}}
	
	\subfloat[$r=3$]{\includegraphics[width=0.4\textwidth]{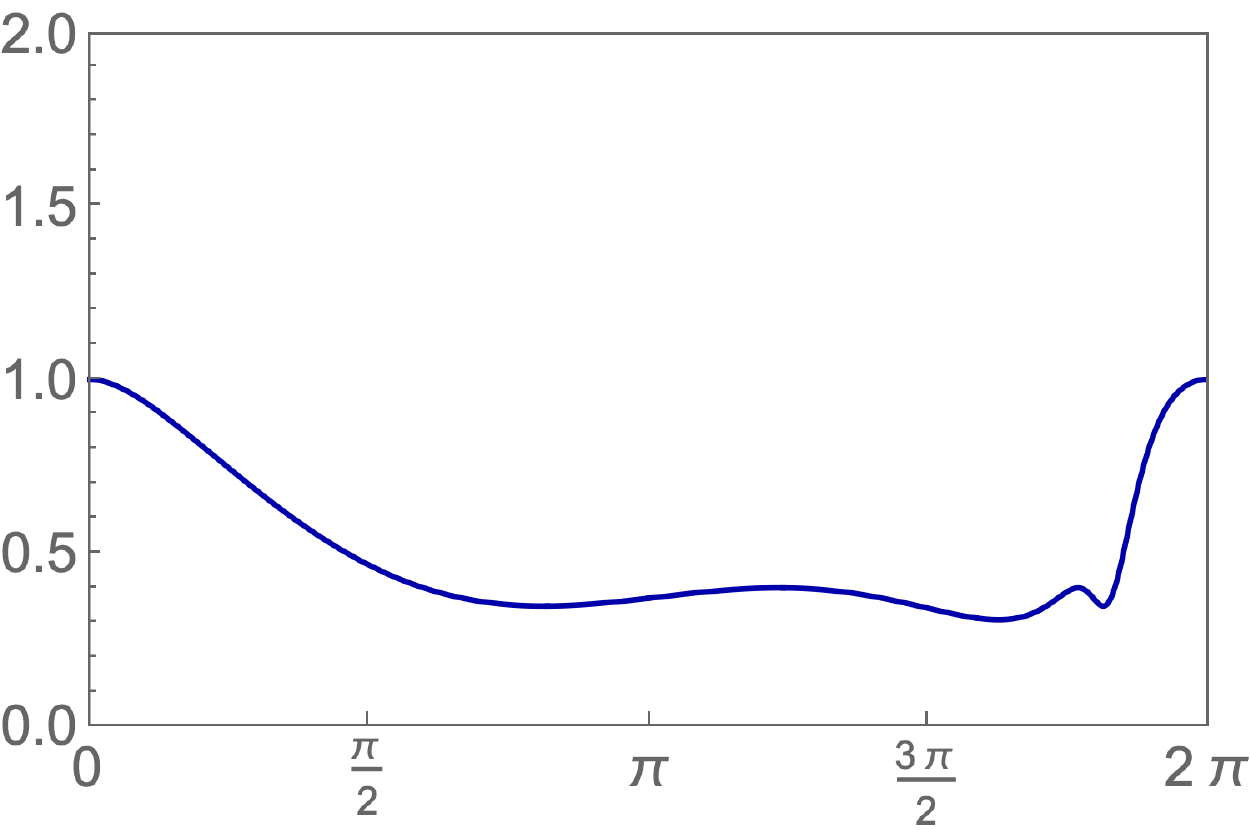}\label{sfB3}}\qquad
	\subfloat[$r=4$]{\includegraphics[width=0.4\textwidth]{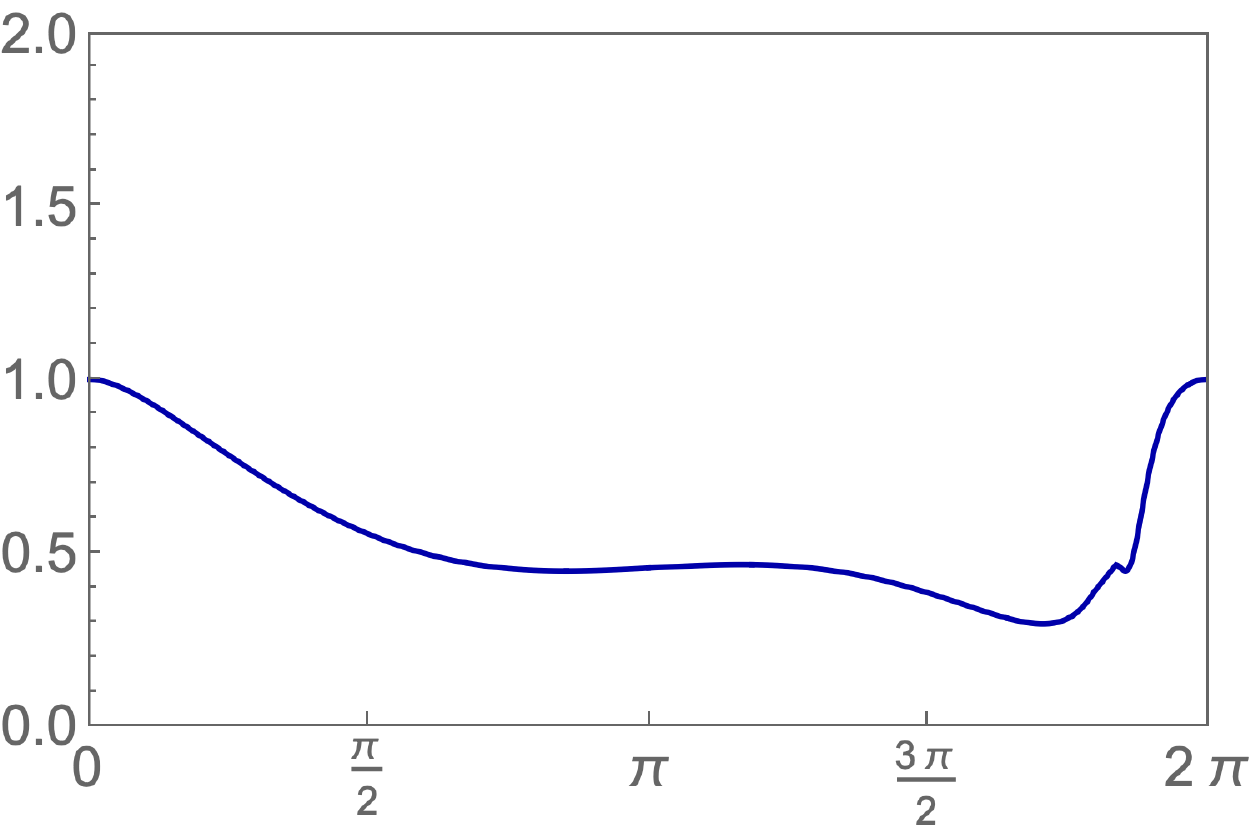}\label{sfB4}}
	\caption[Plots of the normalized bending energy \eqref{ratioB} as a function of $q$ for several values of $r$]{Plots of the normalized bending energy \eqref{ratioB} as a function of $q$ for several values of $r$.}\label{figureB}
\end{figure}

It is worth pointing out that, in both cases treated, the bending energy becomes less and less dependent on the initial condition as $r$ increases. More precisely, one can check that both ratios \eqref{ratioA} and \eqref{ratioB} tend to $1$ as $r \to \infty$.
It seems reasonable to expect that the same conclusion holds for any choice of ruling angle. Proving this is outside the scope of the present study.

\section*{Acknowledgments}
The author thanks David Brander, Christian M\"{u}ller, and an anonymous referee for many helpful comments and suggestions. 

\bibliographystyle{amsplain}
\bibliography{../biblio/biblio}
\end{document}